\documentclass[12pt,letterpaper,titlepage,reqno]{amsart}
\usepackage{amsmath, amssymb, amsthm, amsfonts,amscd,amsaddr,wasysym,enumerate}
\usepackage[
paper=a4paper,
portrait=true,
textwidth=425pt,
textheight=650pt,
tmargin=3cm,
marginratio=1:1
]{geometry}

\theoremstyle{plain}
\newtheorem{theorem}{Theorem}[section]

\newtheorem{cor}[theorem]{Corollary}

\newtheorem{prop}[theorem]{Proposition}
\newtheorem{lemma}[theorem]{Lemma}

\theoremstyle{definition}
\newtheorem{definition}[theorem]{Definition}

\numberwithin{equation}{section}
\newtheorem*{theoremA*}{Theorem A}
\newtheorem*{theoremB*}{Theorem B}
\newtheorem*{theorem1*}{Theorem A'}
\newtheorem*{theoremC*}{Theorem C}
\newtheorem*{theoremD*}{Theorem D}
\newtheorem*{theoremE*}{Theorem E}
\newtheorem*{theoremF*}{Theorem F}
\newtheorem*{theoremE2*}{Theorem E2}
\newtheorem*{theoremE3*}{Theorem E3}

\newcommand{\C}{\mathbb{C}}

\newcommand{\Hc}{\mathcal{H}}
\newcommand{\cH}{\mathcal{H}}

\newcommand{\R}{\mathbb{R}}
\newcommand{\N}{\mathbb{N}}

\newcommand{\Aut}{\operatorname{Aut}}

\newcommand{\SO}{\operatorname{SO}}

\newcommand{\End}{\operatorname{End}}

\newcommand{\GL}{\operatorname{GL}}

\newcommand{\Lie}{\operatorname{Lie}}

\newcommand{\diag}{\operatorname{diag}}

\def\hat{\widehat}
\def\af{\mathfrak{a}}

\def\e{\epsilon}
\def\gf{\mathfrak{g}}

\def\1{{\bf1}}

\newcommand{\even}{\operatorname{even}}
\newcommand{\odd}{\operatorname{odd}}
\newcommand{\Ec}{\mathcal{E}}

\newcommand{\sslash}{\mathbin{/\mkern-6mu/}}
\usepackage[usenames]{color}

\subjclass[2000]{22F30, 22E46, 53C35, 22E40}

\begin{document}
    \title{A note on $L^p$-factorizations of representations}

	\date{\today}
	
	\begin{abstract} In this paper we give an overview on $L^p$-factorizations of Lie group representations and introduce the notion of smooth $L^p$-factorization. 		
	\end{abstract}
	\author[Ganguly]{Pritam Ganguly}
	\email{pritam1995.pg@gmail.com}
	\address{Universit\"at Paderborn, Institut f\"ur Mathematik\\Warburger Stra\ss e 100,
		33098 Paderborn}
	\author[Kr\"otz]{Bernhard Kr\"{o}tz}
	\email{bkroetz@gmx.de}
	\address{Universit\"at Paderborn, Institut f\"ur Mathematik\\Warburger Stra\ss e 100,
		33098 Paderborn}
	
	\author[Kuit]{Job J. Kuit}
	\email{j.j.kuit@gmail.com}
	\address{Universit\"at Paderborn, Institut f\"ur Mathematik\\Warburger Stra\ss e 100,
		33098 Paderborn}
	
	\maketitle

\centerline{\it This article is dedicated to the fond memories of Gerrit van Dijk.}

\section{Introduction}

Gerrit van Dijk had a liking for the theory of Gel'fand pairs and Gel'fand pairs were a recurring theme in his research. He was interested in pairs $(G,H)$ where $G$ is a noncompact semisimple Lie group and $H$ a non-compact symmetric subgroup. The definition of what should be a Gel'fand pair for non-compact $H$ is debatable as there is no natural Hecke algebra of $H$-bi-invariant functions to our disposal.  The notion Gerrit worked with readily translates into the criterion that the space of $H$-invariant distribution vectors
attached to a unitary irreducible representation is at most one-dimensional.
 A space that occurs regularly in his work is the hyperboloid
$$
X_{n}
=\SO_0(n,1)/\SO_0(n-1,1)\qquad (n\geq 2).
$$
The pair $(G,H)=\big(\SO_0(n,1),\SO_0(n-1,1)\big)$ happens to be not Gel'fand.
%However, if $\sigma$ is the involution stabilizing $H$, then $H$ is an open proper subgroup of the fixed point subgroup $G^{\sigma}$, and the pair $(G, G^{\sigma})$ does have the Gel'fand property.
 A beautiful result of Gerrit \cite{vanDijk} asserts that all rank one symmetric spaces have the Gel'fand property with the exception of the $X_{n}$ above.

The space $X_n=G/H$ was considered recently in \cite{KKS} where the failure of the Gel'fand property is shown directly in case $n\geq 4$ and it is shown that there are quantitative differences between the various $G$-equivariant embeddings $\pi\hookrightarrow C^{\infty}(G/H)$ for certain irreducible subrepresentations $\pi$ of $L^{2}(G/H)$.

In this article we address some questions related to the phenomenon, named after Kunze and Stein, that for semisimple Lie groups $G$ the convolution product
$$
C_{c}(G)\times L^{2}(G)\to L^{2}(G),\quad (f,h)\mapsto f*h
$$
defined by
$$
f\ast h(x)=\int_{G}f(y)h(y^{-1}x)~dy,
$$
extends to a continuous map
$$
L^p(G)\times L^2(G)\to L^2(G)
$$
for all $1\leq p<2$. See \cite{C}.
In 1960, Kunze and Stein \cite{KS} discovered this result in the case $G=SL_2(\R)$ as a byproduct of their study of  analytic continuation of  the principal series  as uniformly bounded representations.

We set out to investigate whether a Kunze-Stein phenomenon could hold for symmetric spaces $G/H$, i.e., whether the natural convolution map
$$
C_{c}(G)\times L^{2}(G/H)\to L^{2}(G/H),\quad (f,h)\mapsto f*h
$$
extends to a continuous map
$$
L^p(G)\times L^2(G/H)\to L^2(G/H)
$$
for some or all $1< p<2$.
\par The Kunze-Stein phenomenon for $G$  may be put in a slightly more general framework by saying that a representation $(\pi,E)$ of $G$ is $L^{p}$-factorizable  if the natural map
$$
C_{c}(G)\times E\to E,\quad (f,v)\mapsto \pi(f)v
$$
factors through $L^{p}(G)\times E$, i.e.,  extends to a continuous map
$$
L^p(G)\times E\to E.
$$
We introduce a new notion:  $(\pi,E)$ is smoothly-$L^{p}$-factorizable if the natural map
$$
C_{c}^{\infty}(G)\times E^{\infty}\to E^{\infty},\quad (f,v)\mapsto \pi(f)v
$$
extends to a continuous map
$$
L^p(G)^{\infty}\times E^{\infty}\to E^{\infty}.
$$
Here $L^p(G)^{\infty}$ denotes the space of smooth vectors for the (left and right) regular of $G\times G$ on $L^p(G)$.
Clearly every $L^{p}$-factorizable representation is also smoothly-$L^{p}$-factorizable but the converse is not clear. A natural question would be:
\medbreak \noindent{\bf Open question:} Is a unitary representation $L^p$-factorizable if it is smoothly $L^p$-factorizable?
 \medbreak

We consider this question worth of further investigation, in particular because smooth factorizability appears to be easier to establish. For instance we show by fully elementary means that the smooth Fr\'echet representation on the Harish-Chandra Schwartz space is smoothly-$L^p$-factorizable for all
$1\leq p<2$. See Corollary \ref{cor smoothlp}.
\par We return to the hyperboloids and the main result of this article. We show that
$L^2(X_n)$ is not smoothly-$L^p$-factorizable for
$p>\frac{n-1}{n-2}$ which implies in particular that $L^2(X_n)$ is not $L^p$-factorizable.
This means that there is no straightforward generalization of the Kunze-Stein phenomenon for the group $G=G/\{e\}$ to homogeneous spaces $X=G/H$, in particular, non-Riemannian symmetric spaces.

Upon completion of this work we came across the recent work of Samei-Wiersma \cite{SW} which combined with \cite{BK} yields the following result:

\begin{theorem}
Let $X=G/H$ where $G$ is semi-simple and $H\subseteq G$ a reductive subgroup.  Then there exists a $1<p_{X}<2$ such that $L^{2}(X)$ is $L^{p}$-factorizable for all $p<p_{X}$.
\end{theorem}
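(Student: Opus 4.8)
The plan is to reduce the statement to a single operator-norm inequality for the representation $\pi$ of $G$ by left translations on $L^{2}(X)$, and then to combine a quantitative decay estimate for its matrix coefficients (the geometric input, provided by \cite{BK}) with the functional-analytic mechanism of \cite{SW} that converts such decay into an $L^{p}$-bound.

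First I would reformulate factorizability. For a unitary representation $(\sigma,E)$ of $G$, $\sigma$ is $L^{p}$-factorizable if and only if there is a constant $C$ with $\|\sigma(f)\|_{\B(E)}\le C\,\|f\|_{L^{p}(G)}$ for every $f\in C_{c}(G)$: density of $C_{c}(G)$ in $L^{p}(G)$ (recall $p<\infty$) promotes such an estimate to the required continuous extension, while conversely a continuous bilinear extension $L^{p}(G)\times E\to E$ is automatically bounded, which yields the estimate. Every unitary representation is $L^{1}$-factorizable, since $\|\sigma(f)v\|\le\|f\|_{1}\|v\|$; and if $f\mapsto\sigma(f)$ is bounded from both $L^{1}(G)$ and $L^{p_{0}}(G)$ into $\B(E)$, then it is bounded from $L^{p}(G)$ into $\B(E)$ for every $p\in[1,p_{0}]$ by interpolation. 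It therefore suffices to exhibit one exponent $p_{X}\in(1,2)$ and a constant with $\|\pi(f)\|\le C\,\|f\|_{L^{p_{X}}(G)}$; the assertion for all $p<p_{X}$ then follows.

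Next comes the geometric estimate, which I expect to be the main obstacle, as it is exactly where the reductivity of $H$ enters through the structure theory of reductive homogeneous spaces. By \cite{BK} there exist a real number $s_{X}>0$ and a dense $G$-stable subspace $D\subseteq L^{2}(X)$ (for instance the space of $K$-finite vectors, or $C_{c}^{\infty}(X)$) such that
\[
|\langle\pi(g)u,v\rangle|\ \le\ C_{u,v}\,\Xi(g)^{s_{X}}\qquad(u,v\in D,\ g\in G),
\]
where $\Xi$ is the Harish-Chandra spherical function of $G$. Since $\Xi\in L^{2+\epsilon}(G)$ for every $\epsilon>0$, choosing an integer $N=N_{X}$ with $N_{X}s_{X}>1$ forces the matrix coefficients of vectors of $D$ to lie in $L^{2N}(G)$; equivalently, the $N$-fold inner tensor power $\pi^{\otimes N}$ is weakly contained in the regular representation $\lambda_{G}$ of $G$ on $L^{2}(G)$. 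The integer $N_{X}$ — equivalently, how far $s_{X}$ is from $1$ — is the quantitative measure of the failure of temperedness of $L^{2}(X)$: for Riemannian $X$ one has $s_{X}=1$ and $N_{X}=1$, while for the hyperboloids $X_{n}$ one has $s_{X}<1$.

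Finally I would invoke \cite{SW}. Semisimple Lie groups, and their finite powers $G^{N}$, have the Kunze--Stein property: $\|\lambda_{G}(h)\|_{\B(L^{2}(G))}\le C_{p}\,\|h\|_{L^{p}(G)}$ for $1\le p<2$, and likewise on $G^{N}$ (see \cite{C}). Given the weak containment $\pi^{\otimes N}\prec\lambda_{G}$ of the previous step, the argument of \cite{SW} — a variant of the Cowling--Haagerup--Howe tensor-power method — produces the sought bound: one dominates $\|\pi(f)\|$ by a power of a convolution norm of $|f^{*}*f|$ tested against a power of $\Xi$, and estimates the latter by combining the Herz majorization principle $\|\lambda_{G}(h)\|\le\int_{G}|h(g)|\,\Xi(g)\,dg$ with the Kunze--Stein convolution inequality on $G$ (and on $G^{N}$). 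The resulting exponent $p$ is strictly between $1$ and $2$, depends only on $N_{X}$, and tends to $2$ as $s_{X}\to1$; taking $p_{X}$ to be any such exponent and invoking the reduction above completes the proof. In short, the entire weight of the argument rests on the geometric estimate of \cite{BK}; once that is granted, what remains is the soft Kunze--Stein/tensor-power formalism of \cite{SW,C}.
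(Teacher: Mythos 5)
Your proposal is correct and follows essentially the same route as the paper: the geometric input is the Benoist--Kobayashi estimate giving matrix-coefficient decay (equivalently, $L^{p'}$-integrability for some finite $p'$) on a dense subspace of $L^{2}(X)$, and the functional-analytic step is the Samei--Wiersma/Cowling--Haagerup--Howe machinery upgrading this to a uniform bound $\|\pi(f)\|\le C\|f\|_{p}$, which is equivalent to $L^{p}$-factorizability by Proposition \ref{Prop Factorizability of Banach reps}. The only presentational difference is that the paper passes through ``all matrix coefficients lie in $L^{p'}$'' via the density variant of \cite[Theorem 1.5]{SW} rather than through the tensor-power/weak-containment formulation you describe, but these are two phrasings of the same argument.
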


In fact, \cite[Lemma 2.8]{BK} shows that the dense linear subspace of $L^{2}(X)$ generated by all compactly supported step functions has matrix-coefficients that lie in $L^{p'}(G)$ for all $p'>p'_{X}$ for some $p'_{X}>2$. We apply a variant of \cite[Theorem 1.5]{SW}, where we replace the assumption of cyclicity by density, which is allowed by the arguments in the proof of \cite[Corollary 5.5]{SW}. Proposition \ref{Prop Factorizability of Banach reps} now implies that $L^{2}(X)$ is $L^{p}$-factorizable for all $1<p<p_{X}$ with $p_{X}$ given by $\frac{1}{p_{X}}+\frac{1}{p'_{X}}=1$.

{\em Acknowledgement:} We thank Henrik Schlichtkrull and the referee for valuable comments and suggestions.

\section{Setup and Notation}
Let $G$ be a unimodular Lie group. By a representation of $G$ we understand a pair $(\pi, E)$ consisting of a complete locally convex topological vector space $E$ and a group homomorphism $\pi: G\rightarrow \GL(E)$ such that the associated action map
$$
G\times E \rightarrow E, (g,v)\mapsto \pi(g)v
$$
is continuous. If $E$ is a Banach, Hilbert or Fr\'echet space, then we say that $(\pi, E)$ is  a Banach, Hilbert or Fr\'echet representation, respectively.

Given a representation $(\pi, E)$ of $G$, we call a vector $v\in E$ smooth provided the orbit map
$$
G\rightarrow E,\quad g\mapsto \pi(g)v
$$ is smooth. We write $E^{\infty}$ for the space of all smooth vectors, which forms a $G$-invariant subspace of $E$.  We denote the restricted action of $G$ on $E^\infty$ by $\pi^{\infty}$ and equip $E^{\infty}$ with the usual topology so that $(\pi^{\infty},E^{\infty})$ forms a representation of $G$. Let $E'$ denote the continuous dual of $E$, which we equip with the usual dual norm.  To each $\lambda\in E'$ and $v\in E$, we associate the matrix coefficient
$$
m_{\lambda, v}: G\rightarrow \mathbb{C},\quad g\mapsto \lambda(\pi(g)v).
$$
We note that for a given $\lambda$ the matrix coefficient $m_{\lambda, v}$ is continuous for every $v\in E$ and smooth for every $v\in E^{\infty}$. By abuse of notation, we denote the associated algebra representation
$$
C_c(G)\times E\to E,\quad (f,v)\mapsto \int_{G}f(g)\pi(g)v\,dg
$$
by $\pi$ as well.
Note that depending on the kind of space $E$ we are working with, $\pi$ may extend to a larger algebra. For example, it is not difficult to see that $\pi$ extends to a bounded representation of  $L^1(G)$ in case $(\pi, E)$ is a bounded Banach representation, i.e., $(\pi,E)$ is a Banach representation so that  there exists a uniform bound on  the operator norms of the operators in $\pi(G)$.

Let $\mathfrak{g}:=Lie(G)$ be the Lie algebra of $G.$ The representation $\pi$ of $G$ gives rise to a representation of $\mathfrak{g}:$
$$d\pi:\mathfrak{g}\rightarrow \End(E^{\infty})$$
defined by the following prescription:
$$d\pi(X)v=\lim_{t\rightarrow 0}\frac{\pi(\exp(tX))v-v}{t} \qquad \left(X\in \mathfrak{g},  v\in E^{\infty}\right).$$
Let $(\pi, E)$ be a Banach representation of G. Let $p$ be the norm on $E$ and fix a basis $X_1,...,X_n$
of $\mathfrak{g}$. For every $k\in \mathbb{N}\cup\{0\}$, we define
$$p_k(v)=\sum_{\alpha\in(\mathbb{N}\cup\{0\})^n}p\left(d\pi(X_1^{\alpha_1} X_2^{\alpha_2}...X_n^{\alpha_n})v\right)\qquad \left(v\in E^{\infty}\right).$$ We call $p_k$, the $k$th Sobolev norm associated to the norm $p$. This family of semi-norms $\{p_k:k\in\mathbb{N}\cup\{0\}\}$ defines a Fr\'{e}chet topology on $E^{\infty}.$

\section{$L^{p}$-factorization}
In this section $G$ is a unimodular Lie group, unless otherwise stated.

\subsection{Definition and first results}
\begin{definition}
We say that a representation $(\pi,E)$ of $G$ is {\em{${L^p}$-factorizable}} provided that the canonical map
$$
C_c(G)\times E\rightarrow E,\quad(f,v)\mapsto \pi(f)v
$$
extends to a continuous bi-linear map
$$
L^p(G)\times E \rightarrow E.
$$
\end{definition}

\begin{prop}\label{Prop Factorizability of Banach reps}
Let $1\le p<\infty$ and let $1<p'\le\infty$ satisfy $\frac1p+\frac1{p'}=1$.
Let further $(\pi,E)$ be a Banach representation of $G$.
The following are equivalent.
\begin{enumerate}[(i)]
\item\label{i} $(\pi,E)$ is $L^p$-factorizable.
\item\label{ii}  $m_{\lambda,v}\in L^{p'}(G)$ for every $\lambda\in E'$ and $v\in E$.
\item\label{iii}  There exists a constant $C>0$ such that
$$\|m_{\lambda,v}\|_{p'}\leq C\|\lambda\|_{E'}\,\|v\|_E$$
for all $\lambda\in E'$ and $v\in E$.
\end{enumerate}
\end{prop}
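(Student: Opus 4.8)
The plan is to prove the cycle of implications (iii) $\Rightarrow$ (i) $\Rightarrow$ (ii) $\Rightarrow$ (iii); the sharper statement (i) $\Rightarrow$ (iii) will drop out of the argument for (i) $\Rightarrow$ (ii). The single device relating the ``abstract'' statement (i) to the ``concrete'' statements (ii) and (iii) is, in both directions, the elementary identity
$$
\lambda(\pi(f)v)=\int_G f(g)\,m_{\lambda,v}(g)\,dg\qquad(f\in C_c(G),\ \lambda\in E',\ v\in E),
$$
used together with two standard facts, both valid because $1\le p<\infty$: that $C_c(G)$ is dense in $L^p(G)$, and that $L^p(G)'\cong L^{p'}(G)$.

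For (iii) $\Rightarrow$ (i), I would feed the estimate of (iii) into the identity above via H\"older's inequality to get $|\lambda(\pi(f)v)|\le C\|f\|_p\,\|\lambda\|_{E'}\|v\|_E$ for $f\in C_c(G)$, then take the supremum over $\lambda$ in the closed unit ball of $E'$ and invoke Hahn--Banach to obtain $\|\pi(f)v\|_E\le C\|f\|_p\|v\|_E$. Since $C_c(G)$ is dense in $L^p(G)$ and $E$ is complete, for each fixed $v$ the map $f\mapsto\pi(f)v$ extends uniquely to a bounded linear map $L^p(G)\to E$; linearity in $v$ passes to the limit, and the uniform bound $\|\pi(f)v\|_E\le C\|f\|_p\|v\|_E$ on the extension is exactly the continuity of the resulting bilinear map $L^p(G)\times E\to E$.

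For (i) $\Rightarrow$ (ii), joint continuity of the extended bilinear map forces a bound $\|\pi(f)v\|_E\le C\|f\|_p\|v\|_E$ for \emph{all} $f\in L^p(G)$. Restricting to $f\in C_c(G)$ and using the identity, the functional $f\mapsto\int_G f\,m_{\lambda,v}$ is bounded on $(C_c(G),\|\cdot\|_p)$ with norm at most $C\|\lambda\|_{E'}\|v\|_E$; it therefore extends to $L^p(G)$ and, by duality, is represented by some $h\in L^{p'}(G)$ with $\|h\|_{p'}\le C\|\lambda\|_{E'}\|v\|_E$. The remaining point is that $h=m_{\lambda,v}$ almost everywhere: $m_{\lambda,v}$ is continuous, $h$ is locally integrable, and $\int_G f(m_{\lambda,v}-h)=0$ for every $f\in C_c(G)$, so the fundamental lemma of the calculus of variations applies. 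Hence $m_{\lambda,v}\in L^{p'}(G)$, and in fact this argument already yields the estimate of (iii).

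The main work lies in (ii) $\Rightarrow$ (iii), where the pointwise hypothesis must be promoted to a uniform estimate. I would consider the bilinear map $B\colon E'\times E\to L^{p'}(G)$, $(\lambda,v)\mapsto m_{\lambda,v}$, and verify that each of its partial maps is continuous by the closed graph theorem: if $\lambda_n\to\lambda$ in $E'$ and $m_{\lambda_n,v}\to h$ in $L^{p'}(G)$, then testing against $f\in C_c(G)$ and using the identity together with continuity of $\lambda\mapsto\lambda(\pi(f)v)$ gives $\int_G f\,h=\int_G f\,m_{\lambda,v}$, whence $h=m_{\lambda,v}$ a.e.; the argument for the other variable is the same, using that $\pi(f)$ is a bounded operator on $E$ so that $v\mapsto\lambda(\pi(f)v)$ is continuous. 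Since $E'$ and $E$ are Banach spaces, a separately continuous bilinear map between them is automatically jointly continuous (a form of the uniform boundedness principle), and joint continuity of $B$ is precisely (iii). I expect the main obstacle to be the step, recurring in all three implications, of recognising an $L^{p'}$-limit supplied by functional analysis (from duality or from the closed graph theorem) as the \emph{continuous} matrix coefficient $m_{\lambda,v}$; this is exactly where testing against $C_c(G)$ and the fundamental lemma are indispensable. Once that identification is secured, assembling H\"older, Hahn--Banach, the closed graph theorem and Banach--Steinhaus is routine.
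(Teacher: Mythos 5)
Your proposal is correct and uses essentially the same ingredients as the paper's proof: the identity $\lambda(\pi(f)v)=\int_G f\,m_{\lambda,v}$ together with H\"older, Hahn--Banach and $L^p$--$L^{p'}$ duality to link (i) and (iii), and the closed graph theorem plus the automatic joint continuity of separately continuous bilinear maps on Banach spaces for (ii) $\Rightarrow$ (iii). The only difference is organizational (you run a cycle (iii) $\Rightarrow$ (i) $\Rightarrow$ (ii) $\Rightarrow$ (iii), whereas the paper establishes (i) $\Leftrightarrow$ (iii) directly and notes (iii) $\Rightarrow$ (ii) is trivial), which does not change the substance.
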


\begin{proof} The proof is an adaption of the unitary case treated in \cite[Lemma 27]{KS}.
It follows from the Hahn-Banach theorem that the statement of (\ref{i}) is equivalent to the
existence of a constant $C>0$ such that
$$|\lambda(\pi(f)v)|\leq C\|f\|_p\|\lambda\|_{E'}\|v\|_E$$
for all $f\in C_c(G)$, $\lambda\in E'$ and $v\in E$. Since
$$\lambda(\pi(f)v)=\int_G f(g)m_{\lambda,v}(g)\,dg$$
the equivalence between (\ref{i}) and (\ref{iii}) then follows from H{ö}lder's inequality and the duality between
$L^p(G)$ and $L^{p'}(G)$.

The implication of (\ref{ii}) from (\ref{iii}) is trivial. The statement in
(\ref{iii}) amounts to joint continuity of the bilinear map $(\lambda,v)\mapsto m_{\lambda,v}$ into $L^{p'}(G)$.
It is an easy
consequence of the Banach-Steinhaus theorem that separate continuity implies joint continuity
for a bilinear map defined on Banach spaces .
%(see \cite[\S 15, 14(3)]{TLR}).
Hence it remains to show that (\ref{ii}) implies
separate continuity of $(\lambda,v)\mapsto m_{\lambda,v}$ into $L^{p'}(G)$.

We assume (\ref{ii}) and consider for a fixed $v\in E$ the map
$$T_v: E'\to L^{p'}(G), \quad \lambda\mapsto m_{\lambda,v}.$$
To prove $T_v$ is bounded we use the closed graph theorem. Let $\lambda_n\to\lambda\in E'$
for $n\to\infty$, and assume $T_v\lambda_n\to m$ in $L^{p'}(G)$. Since $m_{\lambda,v}(g)$ depends
continuously on $\lambda$ for all $g\in G$, the sequence of functions $T_v\lambda_n\in C(G)$ converges pointwise
to $T_v\lambda$. Hence $T_v\lambda=m$ a.e., and $T_v\lambda_n\to T_v\lambda$ in $L^{p'}$. Hence $T_v$ is bounded.

By a similar argument, for $\lambda\in E'$ fixed  the map
$$S_\lambda: E\to L^{p'}(G), \quad v\mapsto m_{\lambda,v}$$
is bounded. Hence we have established the separate continuity, and (\ref{iii}) follows.
\end{proof}

\begin{cor}
Let $1\leq p<\infty$ and let $(\pi, E)$ be a  bounded representation on a Banach space. If $(\pi, E)$ is $L^p$-factorizable then $(\pi, E)$ is $L^q$-factorizable for all $1\leq q\leq p$.
\end{cor}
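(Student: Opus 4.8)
The plan is to reduce everything to the matrix-coefficient criterion of Proposition~\ref{Prop Factorizability of Banach reps} and then interpolate against the trivial sup-norm bound that boundedness of $(\pi,E)$ provides. Write $p'$ and $q'$ for the exponents conjugate to $p$ and $q$; since $1\le q\le p$ we have $p'\le q'\le\infty$, and $(\pi,E)$, being a bounded Banach representation, is in particular a Banach representation, so Proposition~\ref{Prop Factorizability of Banach reps} is available for either exponent.

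First I would record the endpoint estimate. Since $(\pi,E)$ is bounded, there is a constant $M>0$ with $\|\pi(g)\|\le M$ for all $g\in G$, where $\|\cdot\|$ is the operator norm on $E$; consequently $|m_{\lambda,v}(g)|=|\lambda(\pi(g)v)|\le M\|\lambda\|_{E'}\|v\|_E$ for all $\lambda\in E'$, $v\in E$ and $g\in G$. Thus $m_{\lambda,v}\in L^\infty(G)$ with $\|m_{\lambda,v}\|_\infty\le M\|\lambda\|_{E'}\|v\|_E$. On the other hand, $L^p$-factorizability of $(\pi,E)$ gives, by condition (\ref{iii}) of Proposition~\ref{Prop Factorizability of Banach reps}, a constant $C>0$ with $\|m_{\lambda,v}\|_{p'}\le C\|\lambda\|_{E'}\|v\|_E$ for all $\lambda,v$.

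Next I would interpolate. Put $\theta:=p'/q'\in[0,1]$, with the convention $p'/\infty=0$. For $q'<\infty$ one has the pointwise bound $|m_{\lambda,v}|^{q'}=|m_{\lambda,v}|^{p'}\,|m_{\lambda,v}|^{q'-p'}\le |m_{\lambda,v}|^{p'}\,\|m_{\lambda,v}\|_\infty^{\,q'-p'}$, which after integrating and taking $q'$-th roots yields $\|m_{\lambda,v}\|_{q'}\le\|m_{\lambda,v}\|_{p'}^{\theta}\,\|m_{\lambda,v}\|_\infty^{1-\theta}$; for $q'=\infty$ (i.e.\ $q=1$) this inequality holds trivially with $\theta=0$. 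Combining with the two estimates above,
$$
\|m_{\lambda,v}\|_{q'}\le\bigl(C\|\lambda\|_{E'}\|v\|_E\bigr)^{\theta}\bigl(M\|\lambda\|_{E'}\|v\|_E\bigr)^{1-\theta}=C^{\theta}M^{1-\theta}\,\|\lambda\|_{E'}\|v\|_E .
$$
Hence $m_{\lambda,v}\in L^{q'}(G)$ with a bound uniform in $\lambda$ and $v$, i.e.\ condition (\ref{iii}) of Proposition~\ref{Prop Factorizability of Banach reps} holds for the exponent $q$; by that proposition $(\pi,E)$ is $L^q$-factorizable.

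There is no real obstacle here: the argument is just the elementary $L^{p'}$–$L^\infty$ interpolation of matrix coefficients, and the hypothesis of boundedness is exactly what makes the $L^\infty$-endpoint available. The only point worth a word is the degenerate endpoint $q=1$ (so $q'=\infty$), where the interpolation step collapses and one simply invokes the sup-norm estimate directly; this is the case $\theta=0$ above.
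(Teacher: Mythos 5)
Your proof is correct and follows essentially the same route as the paper: boundedness of the representation gives the $L^\infty$ control on matrix coefficients, and the inclusion $L^{p'}(G)\cap L^\infty(G)\subset L^{q'}(G)$ (which is exactly your interpolation inequality) reduces the claim to Proposition~\ref{Prop Factorizability of Banach reps}. The only cosmetic difference is that you track the constant $C^\theta M^{1-\theta}$ so as to verify condition (\ref{iii}) directly, whereas the paper invokes the qualitative condition (\ref{ii}) and lets the proposition's closed-graph argument supply the uniform bound.
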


\begin{proof}
Matrix coefficients of a bounded Banach representations are bounded and $L^{p'}(G)\cap L^{\infty}(G)\subset L^{q'}(G)$ for all $q'\geq p'$. Therefore, the assertion follows from Proposition \ref{Prop Factorizability of Banach reps}.
\end{proof}

\subsection{Kunze-Stein phenomenon}
Assume for now that $G$ is a connected noncompact semisimple Lie group with finite center.
Let $L$ denote the left regular representation $G$ on $L^2(G)$ defined by the action
$$
L(g)f(x)
=f(g^{-1}x),~g\in G,~~f\in L^2(G).
$$
Given $f,h\in L^2(G)$, the associated matrix coefficient is given by
$$
m^L_{f,h}(g)
=\langle L(g)h,\overline{f}\rangle = f\ast h^{\vee}(g),
$$
where $h^{\vee}(g)=h(g^{-1})$.
Therefore, for $1<p<\infty$, by Proposition \ref{Prop Factorizability of Banach reps}, it follows that $L$ is $L^p$-factorizable if and only if there exists a $C>0$ so that
$$
\|f\ast h\|_{p'}
\leq C \|f\|_2\|h\|_2\qquad \big( f,h\in L^2(G)\big).
$$
In view of the duality, this is equivalent to
$$
\|f\ast h\|_2
\leq C \|f\|_p\|h\|_2,~~f\in L^p(G),~h\in L^2(G).
$$
In other words, $L^p$-factorizability of $L$ is equivalent to convolution extending to a continuous map
$$
	L^p(G)\times L^2(G)\to L^2(G),\quad (f,h)\mapsto f*h.
$$
If the latter holds true for all, it is called the {\em Kunze-Stein phenomenon} for $G$ and $p$. The Kunze-Stein phenomenon occurs if and only if $1\leq p<2$ (see \cite{C}). Hence the left regular representation $L$ is $L^p$-factorizable if and only if  $1\leq p<2$.

Let $\sigma$ and $\rho$ be unitary representation of $G$. We recall that $\sigma$ is
weakly contained in $\rho$ if any diagonal matrix coefficient of $\sigma$ can be approximated,
uniformly on compacta, by linear combinations of diagonal matrix coefficients of $\rho$. In view of \cite[Lemma 1.23]{Eymard}
$\sigma$ is weakly contained in $\rho$ if and only if
$$
\|\sigma(f)\|_{\sigma}\leq \|\rho(f)\|_{\rho}\qquad\big(f\in C_{c}(G)\big).
$$

\begin{cor}
If $G$ is a connected noncompact semisimple Lie group with finite center, then any unitary representation $\pi$ of $G$ which is weakly contained in $L^{2}(G)$ is $L^{p}$-factorizable for every $1\leq p<2$.
\end{cor}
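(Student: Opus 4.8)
The plan is to reduce the assertion to the $L^p$-factorizability of the left regular representation $L$ on $L^2(G)$, which has already been recorded in this subsection, and then to transport the defining estimate along the weak-containment inequality.

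First I would record the operator-norm reformulation of $L^p$-factorizability for a unitary representation $(\pi,\mathcal H)$. Identifying $\mathcal H'$ with $\mathcal H$ via Riesz, the matrix coefficient becomes $m_{\lambda,v}(g)=\langle\pi(g)v,\lambda\rangle$, so that $\langle\pi(f)v,\lambda\rangle=\int_G f(g)\,m_{\lambda,v}(g)\,dg$ for every $f\in C_c(G)$. Taking suprema over the unit balls of $\mathcal H$ and using H\"older's inequality together with the duality $L^p(G)'=L^{p'}(G)$, condition (\ref{iii}) of Proposition~\ref{Prop Factorizability of Banach reps} is seen to be equivalent to the existence of a constant $C>0$ with $\|\pi(f)\|_{\mathcal H\to\mathcal H}\le C\|f\|_p$ for all $f\in C_c(G)$. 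Hence $(\pi,\mathcal H)$ is $L^p$-factorizable precisely when such a bound holds.

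Next I would invoke the two inputs and concatenate them. On the one hand, the Kunze--Stein phenomenon for $G$ holds for every $1\le p<2$ (see \cite{C}); as explained in this subsection, this amounts to a constant $C>0$ with $\|f*h\|_2\le C\|f\|_p\|h\|_2$ for $f\in C_c(G)$ and $h\in L^2(G)$, i.e.\ $\|L(f)\|_{L^2(G)\to L^2(G)}\le C\|f\|_p$. On the other hand, weak containment of $\pi$ in $L^2(G)$ gives, via \cite[Lemma~1.23]{Eymard} as recalled above, $\|\pi(f)\|_{\mathcal H\to\mathcal H}\le\|L(f)\|_{L^2(G)\to L^2(G)}$ for all $f\in C_c(G)$. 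Combining the two estimates yields $\|\pi(f)\|_{\mathcal H\to\mathcal H}\le C\|f\|_p$ for all $f\in C_c(G)$, which by the previous paragraph is exactly the $L^p$-factorizability of $\pi$; equivalently, since $C_c(G)$ is dense in $L^p(G)$, the map $(f,v)\mapsto\pi(f)v$ extends continuously to $L^p(G)\times\mathcal H\to\mathcal H$.

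The hard part here is essentially non-existent: the only thing demanding attention is the bookkeeping in the first step, namely matching the operator-norm formulation — in which both the Kunze--Stein statement and Eymard's criterion are naturally phrased — with the matrix-coefficient formulation of Proposition~\ref{Prop Factorizability of Banach reps}. Apart from that, the proof is a formal chaining of two known inequalities; the same argument in fact shows that weak containment in \emph{any} $L^p$-factorizable unitary representation forces $L^p$-factorizability.
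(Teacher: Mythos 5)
Your proof is correct and follows essentially the same route as the paper: the paper likewise chains the weak-containment inequality $\|\pi(f)\|\le\|L(f)\|$ from Eymard's criterion with the Kunze--Stein bound $\|L(f)\|\le C\|f\|_p$ to get $\|\pi(f)\|\le C\|f\|_p$ for all $f\in C_c(G)$, and then concludes $L^p$-factorizability by density. Your extra bookkeeping matching the operator-norm bound with the matrix-coefficient formulation of Proposition \ref{Prop Factorizability of Banach reps} is a harmless elaboration of what the paper leaves implicit.
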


\begin{proof}
Let $\lambda$ be the left-regular representation of $G$ on $L^{2}(G)$.
For every $1\leq p<2$ there exists a $C>0$ so that for every $f\in C_{c}(G)$ 
$$
\|\pi(f)\|_{\pi}\leq \|\lambda(f)\|_{\lambda}\leq C \|f\|_{p}.
$$
This implies that $\pi$ is $L^{p}$-factorizable for every $1\leq p<2$.
\end{proof}

\subsection{$L^p$-factorizability of unitary representations}
We first consider direct integrals of unitary representations.
We begin with a definition.

\begin{definition}
Let $(S, \mu)$ be a $\sigma$-finite measure space.
We say that the family $(\pi_{s}, E_{s})_{s\in S}$ of Banach representations of $G$ is {\em{${L^p}$-factorizable}} provided that that there exists a constant $C>0$ so that for almost every $s\in S$ the canonical map
$$
C_c(G)\times E_{s}\rightarrow E_{s},\quad(f,v)\mapsto \pi_{s}(f)v
$$
satisfies
\begin{equation}\label{eq Estimate pi_s(f)}
\|\pi_{s}(f)v\|_{s}\leq C\|f\|_{p}\|v\|_{s}\qquad\big(f\in C_{c}(G),v\in E_{s}\big).
\end{equation}
\end{definition}

Note that if a family $(\pi_{s}, E_{s})_{s\in S}$ is $L^{p}$-factorizable, then the representation $(\pi_{s}, E_{s})$ is $L^{p}$-factorizable for almost every $s\in S$.

\begin{prop}\label{Prop Direct integral} Let $1\leq p<\infty$, let $(S, \mu)$ be a $\sigma$-finite measure space and let
$$
\Big(\pi=\int^{\oplus}_{S}\pi_{s}\,d\mu(s), \cH=\int^{\oplus}_{S}\cH_{s}\,d\mu(s)\Big)
$$
be a direct integral of a family $(\pi_{s},\cH_{s})_{s\in S}$ of unitary representations on separable Hilbert spaces.
Then $(\pi,\cH)$ is $L^p$-factorizable if and only if the family $(\pi_s, \cH_s)_{s\in S}$ is $L^p$-factorizable. 
\end{prop}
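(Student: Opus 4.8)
The plan is to reduce both sides of the equivalence to a single uniform operator-norm estimate on $C_c(G)$, to dispatch the ``if'' direction by a direct $L^2$-computation inside the direct integral, and to dispatch the ``only if'' direction by combining the identity $\|\pi(f)\|_{\mathrm{op}}=\operatorname{ess\,sup}_{s\in S}\|\pi_s(f)\|_{\mathrm{op}}$ for decomposable operators with a separability argument that lets one interchange the quantifiers ``for almost every $s$'' and ``for all $f$''.

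First I would record the operator-theoretic reformulation. Since $\pi$ and every $\pi_s$ is unitary, hence a bounded Banach representation, a duality argument (or the intermediate estimate appearing in the proof of Proposition~\ref{Prop Factorizability of Banach reps}, applied to a Hilbert space $\mathcal{K}$ where $\mathcal{K}'\cong\overline{\mathcal{K}}$ and $m_{\lambda,v}(g)=\langle\rho(g)v,w\rangle$) shows that a unitary representation $(\rho,\mathcal{K})$ of $G$ is $L^p$-factorizable if and only if there is a constant $C>0$ with $\|\rho(f)\|_{\mathrm{op}}\le C\|f\|_p$ for all $f\in C_c(G)$. In the same way, the defining inequality for an $L^p$-factorizable family says precisely that $(\pi_s,\cH_s)_{s\in S}$ is $L^p$-factorizable if and only if there is a $C>0$ such that $\|\pi_s(f)\|_{\mathrm{op}}\le C\|f\|_p$ for $\mu$-almost every $s$ and all $f\in C_c(G)$. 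I also record the structural facts that, for each $f\in C_c(G)$, the operator $\pi(f)=\int_G f(g)\pi(g)\,dg$ is decomposable with $\pi(f)=\int_S^{\oplus}\pi_s(f)\,d\mu(s)$ (the field $s\mapsto\pi_s(f)$ being measurable because $s\mapsto\pi_s(g)$ is), that $\pi(f)v=\int_S^{\oplus}\pi_s(f)v_s\,d\mu(s)$ for $v=\int_S^{\oplus}v_s\,d\mu(s)$, and that $\|\pi(f)\|_{\mathrm{op}}=\operatorname{ess\,sup}_{s\in S}\|\pi_s(f)\|_{\mathrm{op}}$ (a standard fact about decomposable operators).

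The ``if'' direction is then immediate: if $\|\pi_s(f)\|_{\mathrm{op}}\le C\|f\|_p$ for a.e.\ $s$, then for $v=\int_S^{\oplus}v_s\,d\mu(s)\in\cH$ one has
$$\|\pi(f)v\|^2=\int_S\|\pi_s(f)v_s\|_s^2\,d\mu(s)\le C^2\|f\|_p^2\int_S\|v_s\|_s^2\,d\mu(s)=C^2\|f\|_p^2\|v\|^2,$$
so $\pi$ is $L^p$-factorizable (alternatively, just quote $\|\pi(f)\|_{\mathrm{op}}=\operatorname{ess\,sup}_s\|\pi_s(f)\|_{\mathrm{op}}\le C\|f\|_p$). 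For the ``only if'' direction assume $\|\pi(f)\|_{\mathrm{op}}\le C\|f\|_p$ for all $f\in C_c(G)$. For each fixed $f$ the operator-norm identity yields a $\mu$-null set $N_f$ with $\|\pi_s(f)\|_{\mathrm{op}}\le C\|f\|_p$ off $N_f$, but $N_f$ depends on $f$. Here I would use that $G$ is second countable, so that $L^1(G)\cap L^p(G)$ equipped with $\|\cdot\|_1+\|\cdot\|_p$ is separable and contains $C_c(G)$ densely; choose a countable $D\subseteq C_c(G)$ dense for this norm and set $N=\bigcup_{g\in D}N_g$, again $\mu$-null. For $s\notin N$ and arbitrary $f\in C_c(G)$ pick $g_n\in D$ with $\|f-g_n\|_1+\|f-g_n\|_p\to 0$; using the automatic bound $\|\pi_s(h)\|_{\mathrm{op}}\le\|h\|_1$, valid since $\pi_s$ is unitary, one gets $\|\pi_s(f)\|_{\mathrm{op}}\le\|\pi_s(f-g_n)\|_{\mathrm{op}}+\|\pi_s(g_n)\|_{\mathrm{op}}\le\|f-g_n\|_1+C\|g_n\|_p\to C\|f\|_p$. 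Hence $\|\pi_s(f)\|_{\mathrm{op}}\le C\|f\|_p$ for all $f\in C_c(G)$ and all $s\notin N$, so the family $(\pi_s,\cH_s)_{s\in S}$ is $L^p$-factorizable with the same constant $C$.

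The main obstacle is the quantifier exchange in the ``only if'' direction: the decomposable-operator identity only produces, separately for each $f$, an almost-everywhere estimate, and upgrading these to a single null set valid simultaneously for all of $C_c(G)$ is exactly the place where one must exploit both a countable $L^1\cap L^p$-dense family and the crude $L^1$-continuity built into the unitarity of the $\pi_s$. The remaining ingredients — the reformulation via Proposition~\ref{Prop Factorizability of Banach reps} and the elementary $L^2$-computation — are bookkeeping with direct integrals.
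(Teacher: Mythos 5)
Your proof is correct and follows essentially the same route as the paper: the same direct $L^2$ computation inside the direct integral for the ``if'' direction, and the same separability/countable-union-of-null-sets argument for the ``only if'' direction. The one place you are slightly more careful than the paper is in identifying the $L^p$-continuous extension of $\pi_s$ with the original operators $\pi_s(f)$ for $f\in C_c(G)$, which you handle by approximating in $L^1\cap L^p$ and using the bound $\|\pi_s(h)\|_{\mathrm{op}}\le\|h\|_1$; the paper leaves this identification implicit.
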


\begin{proof}
First assume that the family $(\pi_s,\cH_s)_{s\in S}$ is $L^p$-factorizable. Let $C>0$ be so that
$$
\|\pi_s(f)v\|_{s}
\leq C \|v\|_s\|f\|_p\qquad \big(f\in L^{p}(G),v\in \cH_s\big)
$$
for almost every $s\in S$.
For any $v\in \cH$, by definition we have
\begin{align*}
\|\pi(f)v\|^2 =\int_{S}\|\pi_s(f)v_s\|^2\,d\mu(s)
\leq C^{2} \|f\|_p^{2} \int_{S}\|v_s\|^2_s\,d\mu(s)
= C^{2} \|f\|_p^{2}\|v\|^{2}
\end{align*}
proving that $(\pi, \cH)$ is $L^p$ factorizable.

For the other implication, assume that $(\pi, \cH)$ is $L^p$ factorizable. Now using the separability of $L^p(G)$, choose a countable dense subset $\{h_j:j\in\mathbb{N}\}\subset L^p(G)$. So, by assumption there exists a $C>0$ so that for all $j\in \mathbb{N}$,
$$
\|\pi(h_j)v\|\leq C \|h_j\|_p\|v\|\qquad(v\in \cH),
$$
i.e.,
$$
\mathrm{ess\, sup}_{s\in S}\|\pi_s(h_j)\|_{\cH_s\rightarrow \cH_s}\leq C\|h_j\|_p\qquad (j\in \mathbb{N}).
$$
For $j\in\N$ we define
$$
\Omega_{j}
:=\{s\in S:\|\pi_s(h_j)\|_{\cH_s\rightarrow \cH_s}> C \|h_j\|_p\} .
$$
We note that $\mu(\Omega_{j})=0$.
Let $\Omega:=\cup_{j\in \mathbb{N}}\Omega_j$. Then clearly $\mu(\Omega)=0$. Moreover, for every $j\in \mathbb{N}$ and all $s\in S\setminus\Omega$
$$
\|\pi_s(h_j)u\|_{s}
\leq C \|u\|_s\|h_j\|_p\qquad(u\in \cH_s).
$$
In other words, for every $j\in \mathbb{N}$, $$\|\pi_s(h_j)\|_{\cH_s\rightarrow \cH_s}\leq C\|h_j\|_p,~~s\in S\setminus \Omega.$$ But since $\{h_j:j\in\mathbb{N}\}$ is dense in $L^p(G)$, for $s\in S\setminus \Omega$, $\pi_s$ has unique continuous extension to $L^p$ and for any $f\in L^p(G)$, we have
$$\|\pi_s(f)\|_{\cH_s\rightarrow \cH_s}\leq C\|f\|_p,~~s\in S\setminus \Omega,$$ proving the proposition.
\end{proof}

Let $\widehat{G}$ be the set of all equivalence classes of irreducible unitary representations of $G$ equipped with the usual Fell topology.
Now the disintegration theorem for unitary representations $(\pi, \cH)$ of a type I group $G$ states that for such a group there exists a Borel  measure $\mu$ on $\widehat{G}$ such that $(\pi, \cH)$ is unitarily equivalent to the direct integral representation
$$
\left(\int_{\widehat{G}}\pi_s\,d\mu(s),~\int_{\widehat{G}}\cH_s\,d\mu(s)\right).
$$
See for example \cite[Theorem 14.10.5]{W2} for more details. This, as an immediate consequence of the above proposition, yields the following result.

\begin{cor}
\label{typeI}
Let $G$ be a type I Lie group. A unitary representation of $G$ is $L^p$-factorizable if and only if  the family of representations appearing in its disintegration is $L^p$-factorizable.
\end{cor}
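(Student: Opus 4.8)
The plan is to splice together the disintegration theorem quoted just above with Proposition \ref{Prop Direct integral}; the only real work is to check that these two results are compatible, and I would organize this into two small observations plus a little measure-theoretic bookkeeping.

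First I would record that $L^p$-factorizability is a unitary invariant. If $U\colon\cH_1\to\cH_2$ is a unitary isomorphism intertwining unitary representations $(\pi,\cH_1)$ and $(\rho,\cH_2)$ of $G$, then $\rho(f)=U\pi(f)U^{-1}$ for all $f\in C_c(G)$, so $\|\rho(f)w\|=\|\pi(f)U^{-1}w\|$ for $w\in\cH_2$; hence the bound $\|\rho(f)w\|\le C\|f\|_p\|w\|$ holds for $\rho$ exactly when $\|\pi(f)v\|\le C\|f\|_p\|v\|$ holds for $\pi$, and by Proposition \ref{Prop Factorizability of Banach reps} the two representations are $L^p$-factorizable simultaneously. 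Consequently it suffices to prove the corollary for the direct integral $\big(\int_{\widehat G}\pi_s\,d\mu(s),\int_{\widehat G}\cH_s\,d\mu(s)\big)$ furnished by the disintegration theorem, since this representation is unitarily equivalent to the given one and its ``family appearing in the disintegration'' is, by definition, $(\pi_s,\cH_s)_{s\in\widehat G}$.

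Next I would check the hypotheses of Proposition \ref{Prop Direct integral}. Since $G$ is a Lie group it is second countable, so $L^1(G)$ --- and also $L^p(G)$, whose separability is already used in the proof of Proposition \ref{Prop Direct integral} --- is separable; moreover every irreducible unitary representation of $G$ is cyclic for $L^1(G)$ and therefore acts on a separable Hilbert space. As each $\pi_s$ occurring in the disintegration is (a multiple of) such a representation, each $\cH_s$, $s\in\widehat G$, is separable. Proposition \ref{Prop Direct integral} then applies and yields that the above direct integral is $L^p$-factorizable if and only if the family $(\pi_s,\cH_s)_{s\in\widehat G}$ is $L^p$-factorizable, which together with the first observation is exactly the claim.

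I do not anticipate a genuine obstacle; the corollary is essentially a repackaging of Proposition \ref{Prop Direct integral}, and the only point I would be careful about in the write-up is that the disintegration --- and hence the notion of the family being $L^p$-factorizable --- is determined only up to modification of $(\pi_s,\cH_s)$ on a $\mu$-null subset of $\widehat G$, and, if one prefers to think in terms of the central decomposition, up to replacing each $\pi_s$ by a multiple $m\cdot\sigma$ of an irreducible representation $\sigma$. The former ambiguity is harmless because the definition of an $L^p$-factorizable family already quantifies over almost every $s$, and the latter because $m\cdot\sigma$ is $L^p$-factorizable precisely when $\sigma$ is, with the same constant.
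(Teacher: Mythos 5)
Your proposal is correct and follows exactly the route the paper intends: the paper simply declares the corollary an ``immediate consequence'' of the disintegration theorem combined with Proposition \ref{Prop Direct integral}, and your two observations (unitary invariance of $L^p$-factorizability via Proposition \ref{Prop Factorizability of Banach reps}, and separability of the $\cH_s$ so that Proposition \ref{Prop Direct integral} applies) are precisely the bookkeeping the paper leaves implicit. The remarks on null-set ambiguity and multiplicities are sensible extra care but do not change the argument.
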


As a consequence of this result, in the following we witness that there is a large sub-class of type I non-compact groups that do not admit the $L^p$-factorizability property.

\begin{cor}
Let $G$ be a type $I$ Lie group with non-compact center. Then unitary representations of $G$ are not $L^p$-factorizable for $p>1.$ In particular this holds when $G$ is non-compact and abelian, or a Carnot Lie group.
\end{cor}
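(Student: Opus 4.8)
The plan is to reduce everything, via the disintegration theorem for type~I groups, to the case of a \emph{nonzero irreducible} unitary representation, and there to exploit the central character together with the fact that a non-compact group carries infinite Haar measure. Write $Z$ for the center of $G$, a closed (abelian) subgroup which by hypothesis is non-compact, so $\vol(Z)=\infty$; since $G$ is unimodular (the standing assumption of this section), $G/Z$ carries a $G$-invariant measure and Weil's formula $\int_G\phi(g)\,dg=\int_{G/Z}\big(\int_Z\phi(zg)\,dz\big)\,d(gZ)$ holds for non-negative measurable $\phi$.

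Let $(\pi,\cH)$ be a nonzero irreducible unitary representation. For $z\in Z$ the unitary operator $\pi(z)$ commutes with $\pi(G)$, so by Schur's lemma $\pi(z)=\chi(z)\,\mathrm{id}$ for a unitary character $\chi$ of $Z$. Fix $v\in\cH$ with $\|v\|=1$ and put $\lambda=\langle\,\cdot\,,v\rangle\in\cH'$. Then $m_{\lambda,v}(zg)=\chi(z)\,m_{\lambda,v}(g)$, so $|m_{\lambda,v}|$ is constant along $Z$-cosets; moreover $m_{\lambda,v}$ is continuous with $m_{\lambda,v}(e)=1$, hence $|m_{\lambda,v}|\ge c>0$ on some open neighbourhood $U$ of $e$, and therefore on the saturation $UZ$. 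Feeding $\phi=|m_{\lambda,v}|^{p'}$ into Weil's formula and using that the image of $U$ in $G/Z$ has positive measure while $\vol(Z)=\infty$, we obtain $\|m_{\lambda,v}\|_{p'}=\infty$ whenever $p'<\infty$, i.e. whenever $p>1$. By the equivalence of (i) and (ii) in Proposition~\ref{Prop Factorizability of Banach reps}, $\pi$ is not $L^p$-factorizable for any $p>1$.

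For a general nonzero unitary representation $(\pi,\cH)$ of the type~I group $G$, I would apply Corollary~\ref{typeI}: $\pi$ is $L^p$-factorizable if and only if the family $(\pi_s,\cH_s)_s$ in its disintegration over $\widehat{G}$ is, which by definition would require a single constant $C>0$ with $\|\pi_s(f)w\|_s\le C\|f\|_p\|w\|_s$ for almost every $s$. But the disintegrating measure is nonzero and each $\pi_s$ is a nonzero irreducible unitary representation, so by the previous paragraph no fixed $C$ can work; hence the family, and with it $\pi$, is not $L^p$-factorizable. The named examples are covered: non-compact abelian Lie groups are type~I with center equal to $G$, and Carnot Lie groups are type~I (being nilpotent) with center containing the non-compact vector subgroup $\exp(\mathfrak{g}_{r})$ arising from the top stratum of the grading. (The zero representation is trivially $L^p$-factorizable, so the statement concerns nonzero representations.)

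The only point requiring genuine care is the integration-by-slices step: one must make sure Weil's formula really applies — this is exactly where unimodularity of $G$ is used — and that a single nonzero continuous matrix coefficient, being of constant modulus along $Z$-cosets, is forced to have infinite $L^{p'}$-norm when $\vol(Z)=\infty$. Everything else — Schur's lemma, the reduction through Corollary~\ref{typeI}, and the characterization in Proposition~\ref{Prop Factorizability of Banach reps} — is immediate from what has already been established.
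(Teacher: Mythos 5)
Your proposal is correct and follows essentially the same route as the paper: reduction to irreducible representations via Corollary \ref{typeI}, Schur's lemma giving a central character so that $|m_{\lambda,v}|$ is constant along $Z$-cosets, and then infinitude of the $L^{p'}$-norm from $\vol(Z)=\infty$ combined with the nonvanishing of the matrix coefficient near $e$, concluding with Proposition \ref{Prop Factorizability of Banach reps}. The only (immaterial) difference is that you integrate over the fibers via Weil's formula on $G/Z$, whereas the paper uses a local trivialization of the principal bundle $G\to G/Z(G)$ near the identity coset; your version even makes explicit the positivity of $|m_{\lambda,v}|$ on a neighbourhood of $e$, a point the paper leaves implicit.
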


\begin{proof}
In view of  Corollary \ref{typeI}, it is sufficient to prove that irreducible unitary representations of $G$ are not $L^p$ factorizable. Fix $p>1.$ Let $(\pi, H)$ be an irreducible unitary representation of $G.$  So, by the Proposition \ref{Prop Factorizability of Banach reps}, it is enough to show that there exits matrix coefficient of $(\pi, \cH)$ which is not in $L^{p'}(G).$ Now from the Schur lemma it follows that
$$
\pi(z)
=C_z I\qquad \big(z\in Z(G)\big)
$$
where $C_z$ is constant such that $|C_z|=1$ for all $z\in Z(G).$

The center $Z(G)$ is a closed subgroup of $G$ and hence the natural projection $\pi:G\to G/Z(G)$ is a principal fiber bundle. Therefore, there exits a compact neighborhood $M$ of  $e\cdot Z(G)$ in $G/Z(G)$ and a $Z(G)$-equivariant diffeomorphism $\tau:\pi^{-1}(M)\to M\times Z(G)$ so that $\pi=p\circ \tau$, where $p:M\times Z(G)\to M$ is the projection onto the first component. The pull-back of the Haar measure on $G$ along this diffeomorphism yields a product measure of a Haar measure $dz$ on $Z(G)$ and a positive Radon measure $d\nu$ on $M$. Since $Z(G)$ is non-compact, the volume of $Z(G)$ is infinite. Now for any $u\in H\setminus\{0\}$, and $(z,m)\in Z(G)\times M$, it follows that $$|\langle \pi(zm)u,u\rangle|=|\langle \pi(m)u,u\rangle|$$ which yields
$$
\int_{G}|\langle \pi(g)u,u\rangle|^{p'}dg
\geq \int_{Z(G)\times M}|\langle \pi(zm)u,u\rangle|^{p'}dzd\nu(m)=\infty.
$$
This shows  that the matrix coefficients are not in $L^{p'}(G)$, completing the proof.
\end{proof}

\subsection{Local formulation of the Kunze-Stein phenomenon}
Assume that $G$ is connected and semisimple and has finite center.
Let $P=M_{P}A_{P}N_{P}\subseteq G$ be a parabolic subgroup, $\sigma\in \hat{M}_{P}$ a discrete series representation of $M_{P}$ and $\lambda\in i\af_{P}^{*}$. The parabolically induced representation $\pi_{\sigma,\lambda}$ of $G$ is unitary. As a model Hilbert space one can take $\cH_{\sigma}:=L^{2}(K\times_{M_{P}}V_{\sigma})$ independent of $\lambda$, where $V_{\sigma}$ is a unitary model for $\sigma$. 

For the convenience of the reader we provide the following local version of the Kunze-Stein phenomenon. It is a slight strengthening of Proposition \ref{Prop Direct integral} for $\pi$ equal to the left regular repesentation of $G$ on $L^{2}(G)$.

\begin{prop}\label{Prop tempered reps}
For all $1\leq p<2$ there exists a constant $C_{p}>0$ such that for all parabolic subgroups $P=M_{P}A_{P}N_{P}$, discrete series representations $\sigma$ of $M_{P}$ and $\lambda\in i\af_{P}^{*}$
$$
\|\pi_{\sigma,\lambda}(f)\|\leq C_{p}\|f\|_{p}\qquad \big(f\in C_{c}(G)\big).
$$
\end{prop}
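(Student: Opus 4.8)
The plan is to reduce the uniform bound to the well-known Kunze–Stein inequality for $G$ by exploiting the fact that every $\pi_{\sigma,\lambda}$ with $\sigma$ a discrete series representation of $M_P$ is weakly contained in the left regular representation of $G$ on $L^2(G)$. Indeed, by the Harish-Chandra theory of the Plancherel decomposition, the representations $\pi_{\sigma,\lambda}$ occurring in the statement are precisely (up to equivalence and multiplicity) the constituents of the tempered part of $L^2(G)$, and in particular each such $\pi_{\sigma,\lambda}$ is tempered; hence its matrix coefficients are uniform limits on compacta of matrix coefficients of $L^2(G)$. By the criterion recalled before Corollary~3.\ref{...} (i.e.\ \cite[Lemma 1.23]{Eymard}), weak containment of $\pi_{\sigma,\lambda}$ in $\lambda_G$ is equivalent to the operator-norm bound
$$
\|\pi_{\sigma,\lambda}(f)\| \leq \|\lambda_G(f)\|\qquad\big(f\in C_c(G)\big).
$$

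Next I would invoke the (global) Kunze–Stein phenomenon for $G$: since $G$ is connected semisimple with finite center, for every $1\leq p<2$ the left regular representation $\lambda_G$ on $L^2(G)$ is $L^p$-factorizable, i.e.\ there is a constant $C_p>0$ with
$$
\|\lambda_G(f)\| \leq C_p\|f\|_p\qquad\big(f\in C_c(G)\big).
$$
This is exactly the statement recalled in the subsection on the Kunze–Stein phenomenon (see \cite{C}), and the constant $C_p$ depends only on $p$ and $G$, not on anything else. Combining the two displayed inequalities gives
$$
\|\pi_{\sigma,\lambda}(f)\| \leq \|\lambda_G(f)\| \leq C_p\|f\|_p
$$
for all $f\in C_c(G)$, uniformly over all parabolics $P$, all discrete series $\sigma$ of $M_P$, and all $\lambda\in i\af_P^*$ — which is the claim.

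The one point that needs care, and which I would regard as the main obstacle, is justifying \emph{uniformly in $(P,\sigma,\lambda)$} that each $\pi_{\sigma,\lambda}$ is weakly contained in $\lambda_G$. The cleanest route is to note that the direct integral of the family $\{\pi_{\sigma,\lambda}\}$ over the full Plancherel measure of $G$ is unitarily equivalent to $\lambda_G$ (Harish-Chandra's Plancherel theorem), so that a.e.\ member of the family satisfies $\|\pi_{\sigma,\lambda}(f)\|\leq\|\lambda_G(f)\|$ by the disintegration argument in the proof of Proposition~\ref{Prop Direct integral}; one then upgrades ``almost every'' to ``every'' using the continuity of $\lambda\mapsto\pi_{\sigma,\lambda}(f)$ in the operator norm (for fixed $f\in C_c(G)$, since $f$ acts through a fixed compact model $\cH_\sigma$) together with a standard argument that the single exceptional hyperplane, if any, is filled in by this continuity. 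Alternatively — and perhaps more transparently — one quotes directly that tempered representations of $G$ are by definition weakly contained in $\lambda_G$, and that the $\pi_{\sigma,\lambda}$ with $\sigma$ discrete series and $\lambda\in i\af_P^*$ are tempered (this is classical, e.g.\ the unitarity and temperedness of unitary principal series induced from discrete series). Either way the serious analytic content — the Cowling/Herz sharp form of the Kunze–Stein inequality — is imported as a black box, and what remains is the bookkeeping just described.
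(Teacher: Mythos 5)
Your proposal is correct, and in fact your ``disintegration plus continuity'' sub-route is precisely the paper's proof: the paper combines the Kunze--Stein phenomenon with Proposition \ref{Prop Direct integral} to get the bound $\|\pi(f)\|\leq C_{p}\|f\|_{p}$ for almost every $\pi$ in the support of the Plancherel measure, identifies these via Harish-Chandra's Plancherel theorem with the $\pi_{\sigma,\lambda}$ for Lebesgue almost every $\lambda\in i\af_{P}^{*}$, and then removes the null set using continuity of $\lambda\mapsto\|\pi_{\sigma,\lambda}(f)v\|_{\sigma}$ (note: continuity for each fixed $v$, giving lower semicontinuity of the operator norm, is all that is needed -- you do not need operator-norm continuity of $\lambda\mapsto\pi_{\sigma,\lambda}(f)$). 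Your primary framing -- quoting that each $\pi_{\sigma,\lambda}$ is tempered, hence weakly contained in the regular representation, and then applying the Eymard criterion together with Kunze--Stein -- is a legitimate and arguably cleaner alternative; it matches the weak-containment corollary already proved in the paper and delivers the uniformity in $(P,\sigma,\lambda)$ for free, at the cost of importing temperedness of these induced representations as a classical black box (whose standard proofs ultimately rest on the same Plancherel-theoretic or matrix-coefficient input that the paper's a.e.-plus-continuity argument makes explicit).
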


\begin{proof}
The Kunze-Stein phenomenon combined with Proposition \ref{Prop Direct integral} yields a constant $C_{p}>0$ such that for almost all $\pi$ in the support of the Plancherel measure for $L^{2}(G)$ we have
$$
\|\pi(f)v\|_{\pi}\leq C_{p}\|f\|_{p}\|v\|_{\pi}\qquad\big(v\in \cH_{\pi},f\in C_{c}(G) \big),
$$
which rephrases in terms of the operator norm as
$$
\|\pi(f)\|\leq C_{p}\|f\|_{p}\qquad\big(f\in C_{c}(G) \big).
$$
Now we invoke Harish-Chandra's Plancherel theorem and obtain for a fixed $\sigma$ in the discrete series of representations for $M_{P}$ and  Lebesgue almost all $\lambda\in i\af_{P}^{*}$ that
$$
\|\pi_{\sigma,\lambda}(f)v\|_{\sigma}\leq C_{p}\|f\|_{p}\|v\|_{\sigma}\qquad\big(v\in \cH_{\sigma},f\in C_{c}(G) \big).
$$
The assignment
$$
\lambda\mapsto \|\pi_{\sigma,\lambda}(f)v\|_{\sigma}
$$
is continuous. This completes the proof.
\end{proof}

\subsection{Semisimple symmetric spaces}
Assume that $G$ is a connected semisimple Lie group. Let $\sigma$ be an involution i.e., $\sigma\in \Aut(G)$, and $\sigma^2=I$. Suppose $H$ is an open subgroup of the group $G^\sigma$ of fixed points for $\sigma$. Then the pair $(G,H)$ is called a semisimple symmetric pair and the associated homogeneous space $X:=G/H$ is a semisimple symmetric space.

As a special case, when $\sigma=\theta$ is a Cartan involution and $H=K=G^{\theta}$, $X=G/K$ is a Riemannian symmetric space. Semisimple Lie groups are examples of semisimple symmetric spaces. Indeed, if $G'
$ is a connected semisimple Lie group, then take $G=G'\times G'$ and consider the action of $G$ on $G'$ given by $(g_1,g_2)\cdot x:=g_1xg_2^{-1}$. This induces the isomorphism of $G$-spaces
$$
X
:=G/H\simeq G',
$$
where $H=\diag(G')$ is the stabilizer of the identity of $G'$ in $G$. The group $H$ equals the fixed point subgroup $G^{\sigma}$ where  $\sigma$ is the involution given by $\sigma(g_1,g_2)=(g_2, g_1)$. This is referred to as the group case.

The goal of this section is to demonstrate that the Kunze-Stein phenomon does not generalize to the class of semisimple symmetric spaces. To provide a counter example we consider the $n$-dimensional one-sheeted hyperboloid $X$. The space $X$ is homogeneous for the connected Lorentz group $G=\mathrm{SO}_0(n, 1)$.

Let  $\sigma$ be the involution of $G$ defined by
conjugation with  the diagonal matrix $\diag(-1,1,\dots,1)$. The subgroup $G^\sigma$ of
$G$ of $\sigma$-fixed elements is the stabilizer of $\R x_0$ where $x_0:=(1,0,\dots,0)\in \R^{n+1}$.
One of the connected components of $G^{\sigma}$ is given by the subgroup
$$
H=\begin{pmatrix} 1&0\\0&\SO_0(n\!-\!1,1)\end{pmatrix}\subset G.
$$
Then the homogeneous space
$$
G/H
=\SO_0(n,1)/\SO_0(n-1,1)
$$
is a symmetric space. Since $G$ acts transitively on the hyperboloid
$$
X_{n}
:=\{x\in\R^{n+1}\mid x_1^2+\dots +x_n^2-x_{n+1}^2=1\},
$$
and $H$ is nothing but the stabilizer of $x_0$, it follows that $X_{n}\simeq G/H$.

Let $\Delta$ be the $G$-invariant Laplace-Beltrami operator, which is a scalar multiple of the differential operator induced by the Casimir element associated to the Lie group $G$.
For each $\lambda\in\C$ let $\Ec_\lambda(X_{n})$ be the eigenspace
$$
\Ec_\lambda(X_{n})
=\{f\in C^\infty(X_{n})\mid \Delta f=(\lambda^2-\rho^2)f\},
$$
where
$$
\rho
:=\tfrac12(n-1).
$$
For every irreducible $H$-spherical representation of $(\pi,E)$ of $G$ there exists a $\lambda\in\C$ and a $G$-equivariant injection $E\hookrightarrow \Ec_{\lambda}(X_{n})$.
In view of the $G$-equivariant symmetry $x\mapsto -x$, the space $C^\infty(X_{n})$ admits a $G$-equivariant decomposition
$$
C^\infty(X_{n})
=C_{\even}^\infty(X_{n})\oplus C_{\odd}^\infty(X_{n}).
$$
Let us write
$$
\Ec^{\even}_\lambda(X_{n})
=\Ec_\lambda(X_{n})\cap C_{\even}^\infty(X_{n}),
\quad \Ec^{\odd}_\lambda(X_{n})
=\Ec_\lambda(X_{n})\cap C_{\odd}^\infty(X_{n}).
$$
We now recall the following result proved in \cite[Theorem 1.2]{KKS}.
\begin{theorem} \label{thmkksh1}
For every $0<\lambda<\rho$ with $\lambda\in \rho-\N$ the $G$-representations $\Ec^{\even}_\lambda(X_{n})$ and $\Ec^{\odd}_\lambda(X_{n})$ are irreducible and infinitesimally equivalent. Moreover in this case,
	\begin{enumerate}
		\item
		if $\lambda-\rho$ is even then $\Ec^{\odd}_\lambda(X_{n})$ is contained in $L^2(X_{n})$
		and $\Ec^{\even}_\lambda(X_{n})$ is not $G$-tempered, and
		\item
		if $\lambda-\rho$ is odd then $\Ec^{\even}_\lambda(X_{n})$ is contained in $L^2(X_{n})$
		and $\Ec^{\odd}_\lambda(X_{n})$ is not $G$-tempered.
	\end{enumerate}
\end{theorem}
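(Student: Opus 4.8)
\medskip
\noindent\emph{Sketch of a proof of Theorem~\ref{thmkksh1}.}
The plan is to reduce the eigenvalue equation to a radial ODE and, for the lowest $K$-type, to a P\"oschl--Teller Schr\"odinger equation. Write a point of $X_n$ as $x=(\cosh t\,\omega,\sinh t)$ with $t\in\R$ and $\omega\in S^{n-1}$; this identifies $X_n$ with $S^{n-1}\times\R$ with $K=\SO(n)$ acting only on $\omega$, invariant (Lorentzian) metric $\cosh^2 t\,g_{S^{n-1}}-dt^2$, volume $\cosh^{n-1}t\,dt\,d\omega$, and $\Delta=c\bigl(\partial_t^2+(n-1)\tanh t\,\partial_t-\cosh^{-2}t\,\Delta_{S^{n-1}}\bigr)$ for a nonzero constant $c$. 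Expanding in spherical harmonics, $C^\infty(X_n)\big|_K\cong\bigoplus_{m\ge 0}\bigl(\mathcal H_m\otimes C^\infty(\R)\bigr)$, with $\mathcal H_m$ the degree-$m$ harmonics on $S^{n-1}$, each $K$-type of multiplicity one; on the $m$-th summand $\Delta f=(\lambda^2-\rho^2)f$ becomes a second-order ODE for the radial factor with a two-dimensional solution space. That ODE is invariant under $t\mapsto-t$, so the solution space is a line of even plus a line of odd solutions; and since the symmetry $x\mapsto-x$ of $X_n$ is $(\omega,t)\mapsto(-\omega,-t)$, acting on $\mathcal H_m$ by $(-1)^m$, we obtain $\Ec^{\even}_\lambda(X_n)\big|_K\cong\Ec^{\odd}_\lambda(X_n)\big|_K\cong\bigoplus_{m\ge 0}\mathcal H_m$ with all isotypic components one-dimensional; in particular both modules are spherical (for the odd one the $m=0$ radial factor is the $t$-odd solution).

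Next I would identify, at the level of $(\gf,K)$-modules, both $\Ec^{\even}_\lambda(X_n)$ and $\Ec^{\odd}_\lambda(X_n)$ with the spherical principal series $\pi_\lambda$ of $G=\SO_0(n,1)$. For irreducibility I would use the usual mechanism for principal-series-type modules of a rank-one group: a $(\gf,K)$-submodule is a sum of full $K$-types indexed by a subset of $\{0,1,2,\dots\}$ that is closed under the shifts $\mathcal H_m\to\mathcal H_{m\pm 1}$ coming from the $\gf$-action, so irreducibility amounts to the non-vanishing of the scalar radial parts of all these shifts; these radial parts are first-order operators applied to the Legendre functions that solve the radial ODE, and the relevant contiguity relations show they do not vanish for $0<\lambda<\rho$ (alternatively one invokes the Poisson/boundary-value description of $\Ec_\lambda(X_n)$, the two boundary components of de~Sitter space accounting for the doubling into even and odd parts). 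Both modules are then irreducible, spherical, and have the infinitesimal character of $\pi_\lambda$; as $\pi_\lambda$ is irreducible for $0<\lambda<\rho$ --- it is a complementary series representation, the spherical principal series of $\SO_0(n,1)$ being reducible only at $\lambda\in\pm(\rho+\{0,1,2,\dots\})$ --- it is the unique irreducible spherical $(\gf,K)$-module with that infinitesimal character, so $\Ec^{\even}_\lambda(X_n)\cong\pi_\lambda\cong\Ec^{\odd}_\lambda(X_n)$ infinitesimally. This is the first assertion of the theorem; and since (infinitesimally) a complementary series representation is not tempered, neither module is a tempered representation of $G$, giving the ``not $G$-tempered'' clause of both cases.

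It remains to decide, for $\lambda=\rho-k$ with $k\in\N$ and $0<\lambda<\rho$ (which forces $n\ge 4$), which module lies in $L^2(X_n)$. Substituting $\phi=\cosh^{-\rho}t\,u$ in the $m=0$ radial equation: the volume factor cancels because $2\rho=n-1$, so that $\|\phi\,Y_0\|_{L^2(X_n)}^2$ is a constant multiple of $\int_\R|u|^2\,dt$, and the equation becomes
\[
u''+\Bigl(\frac{\rho(\rho-1)}{\cosh^2 t}-\lambda^2\Bigr)u=0 ,
\]
the Schr\"odinger equation for the P\"oschl--Teller potential $-\rho(\rho-1)\cosh^{-2}t$ at energy $-\lambda^2$. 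Its square-integrable eigenfunctions occur exactly at the energies $-(\rho-1-j)^2$ with $j\in\{0,1,2,\dots\}$ and $\rho-1-j>0$, the $j$-th one having $j$ zeros and hence parity $(-1)^j$. Consequently the lowest $K$-type of $\Ec^{\even}_\lambda(X_n)$ (resp.\ of $\Ec^{\odd}_\lambda(X_n)$), carried by the even (resp.\ odd) $m=0$ solution, lies in $L^2(X_n)$ precisely when $\lambda\in(0,\rho)\cap(\rho-\N)$ and $k=\rho-\lambda$ is such that the corresponding bound state is even (resp.\ odd), i.e.\ $k$ odd (resp.\ $k$ even). Finally $L^2(X_n)\cap\Ec^{\even}_\lambda(X_n)$ is a $G$-submodule of the irreducible module $\Ec^{\even}_\lambda(X_n)$, hence is $0$ or all of it, so $\Ec^{\even}_\lambda(X_n)\subseteq L^2(X_n)$ if and only if its lowest $K$-type is, that is, if and only if $\lambda-\rho=-k$ is odd; likewise $\Ec^{\odd}_\lambda(X_n)\subseteq L^2(X_n)$ if and only if $\lambda-\rho$ is even. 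Together with the non-temperedness just proved, this is exactly the dichotomy of the theorem.

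The step I expect to be the main obstacle is the $(\gf,K)$-module identification of $\Ec^{\even}_\lambda(X_n)$ and $\Ec^{\odd}_\lambda(X_n)$ with $\pi_\lambda$ --- that is, establishing irreducibility together with the correct isomorphism class --- which calls either for the boundary-value/Poisson-transform analysis of $\Ec_\lambda(X_n)$ or for a somewhat technical computation with the Legendre contiguity relations to exclude a vanishing shift operator. The remaining ingredients --- the coordinate reduction, the parity splitting of the $K$-spectrum, and the P\"oschl--Teller spectral analysis --- are routine.
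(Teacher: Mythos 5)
The paper does not actually prove this statement: it is quoted verbatim from \cite[Theorem 1.2]{KKS} (``We now recall the following result proved in \ldots''), so there is no in-paper argument to compare against. Judged on its own, your sketch is a reasonable reconstruction of the standard proof and is essentially the route taken in \cite{KKS} and the older hyperboloid literature (Faraut, Strichartz, Molchanov): reduction to the radial ODE in the coordinates \eqref{eq polar coords}, multiplicity-one $K$-type analysis with the parity bookkeeping under $(y,t)\mapsto(-y,-t)$, identification of both parity pieces with the irreducible spherical principal series at $\lambda\in(0,\rho)$ (irreducible there since the spherical principal series of $\SO_0(n,1)$ is reducible only at $\lambda\in\pm(\rho+\N_0)$), and the P\"oschl--Teller computation for the lowest $K$-type. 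Your parity arithmetic checks out: with $k=\rho-\lambda$ the bound state at energy $-\lambda^2$ is the $(k-1)$-st excited state, of parity $(-1)^{k-1}$, which correctly yields $\Ec^{\odd}_\lambda\subset L^2$ for $k$ even and $\Ec^{\even}_\lambda\subset L^2$ for $k$ odd.

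Two points need more than you give them. First, as you say yourself, the irreducibility of $\Ec^{\even}_\lambda(X_n)_K$ and $\Ec^{\odd}_\lambda(X_n)_K$ (non-vanishing of the $\mathcal H_m\to\mathcal H_{m\pm1}$ shift scalars, or the Poisson-transform identification) is the technical heart and is left unproved; everything downstream (infinitesimal equivalence via uniqueness of the irreducible spherical module with the given Casimir eigenvalue, non-temperedness from the complementary-series identification, and the ``all or nothing'' step) depends on it. Second, the ``all or nothing'' step itself has a hidden gap: $\Ec^{\even}_\lambda(X_n)\cap L^2(X_n)$ is not obviously a closed $G$-invariant subspace, and to argue at the $(\gf,K)$-level you must know that if a $K$-finite eigenfunction of $\Delta$ lies in $L^2(X_n)$ then so do its $U(\gf)$-derivatives. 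This is true --- a $K$-finite, $\D(X_n)$-finite function in $L^2(X_n)$ is an analytic vector generating an admissible submodule, a standard but nontrivial fact from the theory of discrete series for symmetric spaces --- but it must be invoked; the alternative is to determine the recessive/dominant asymptotics $e^{(\pm\lambda-\rho)|t|}$ of the radial solutions in \emph{every} $K$-type, not just $m=0$. With those two ingredients supplied, your outline does prove the theorem.
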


Next, we describe the $K$-types. The stabilizer of $e_{n+1}:=(0,0,\cdots, 1)\in \R^{n+1}$ is a maximal compact subgroup $K$ of $G$. Note that $K$ is isomorphic to $SO(n)$. It is convenient for our purpose to work with the coordinate system on $X_{n}$ induced by the diffeomorphism
\begin{equation}\label{eq polar coords}
S^{n-1}\times\R{\to} X_{n},\quad (y,t)\mapsto (y_1\cosh t,\dots,y_n\cosh t,\sinh t)\in X_{n}.
\end{equation}
Now as described in \cite[Section 2]{KKS}, the space $C_K^\infty(X_{n})$ of $K$-finite functions in $C^\infty(X_{n})$ given by
$$C_K^\infty(X_{n})\underset{K}{\simeq} \oplus_{j=0}^\infty \big(\Hc_j\otimes C^\infty(\R)\big)$$ where for each $j\geq 0$, $\Hc_j\subset C^\infty(S^{n-1})$ denotes  the space of spherical harmonics of degree $j$. So, the Harish-Chandra module of $\Ec_\lambda(X_{n})$ is given by
$$
\Ec_{\lambda, K}(X_{n})
=\Ec_\lambda(X_{n})\cap C^{\infty}_K(X_{n}).
$$
Consequently,
$$
\Ec_{\lambda,K}^{\even}(X_{n})=\oplus_{j=0}^\infty \,\Ec_{\lambda,j}^{\even}(X_{n}), \qquad
\Ec_{\lambda,K}^{\odd}(X_{n})=\oplus_{j=0}^\infty \,\Ec_{\lambda,j}^{\odd}(X_{n})
$$
where $\Ec_{\lambda,j}^{\even}(X_{n}) (\text{or}~ \Ec_{\lambda,j}^{\odd}(X_{n}))$ denotes the space of all even (or odd) functions in $ \big(\Hc_j\otimes C^\infty(\R)\big)$ and $\Ec_{\lambda,j}^{\even}(X_{n})\underset{K}{\simeq} \Ec_{\lambda,j}^{\odd}(X_{n})\underset{K}{\simeq} \Hc_j$ are equivalent
irreducible $K$-types for each $j$.

\par The following Proposition will be strengthened in Theorem \ref{main theorem} below. \cite[Example 5.10]{BK}

\begin{prop}
Let $n\geq 4.$  The regular representation of $G$ on $L^2(X_{n})$ is not $L^p$-factorizable for some $1<p<2.$
\end{prop}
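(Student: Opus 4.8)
The plan is to derive the failure of $L^p$-factorizability from Proposition \ref{Prop Factorizability of Banach reps} together with the existence of a non-tempered subrepresentation of $L^2(X_n)$ furnished by Theorem \ref{thmkksh1}.

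First I would reformulate the problem. By Proposition \ref{Prop Factorizability of Banach reps}, applied to the Hilbert representation $(L,L^2(X_n))$ with $\tfrac1p+\tfrac1{p'}=1$, the representation $L$ is $L^p$-factorizable exactly when $g\mapsto\langle L(g)u,w\rangle$ lies in $L^{p'}(G)$ for all $u,w\in L^2(X_n)$. Hence it suffices to exhibit $v,w\in L^2(X_n)$ and some $p'>2$---equivalently, some $p\in(1,2)$---with $g\mapsto\langle L(g)v,w\rangle\notin L^{p'}(G)$. To produce these, observe that $n\geq 4$ forces $\rho=\tfrac12(n-1)>1$, so $\lambda:=\rho-1$ satisfies $0<\lambda<\rho$, $\lambda\in\rho-\N$, and $\lambda-\rho=-1$ is odd. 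Theorem \ref{thmkksh1}(2) then tells us that $\Ec^{\even}_{\lambda}(X_n)$ is irreducible, is contained in $L^2(X_n)$, and is infinitesimally equivalent to $\Ec^{\odd}_{\lambda}(X_n)$, which is not $G$-tempered. Let $V\subseteq L^2(X_n)$ be the closure of $\Ec^{\even}_{\lambda}(X_n)$; it is an irreducible unitary subrepresentation of $G$ whose Harish-Chandra module is that of $\Ec^{\even}_{\lambda}(X_n)$, hence that of the non-tempered representation $\Ec^{\odd}_{\lambda}(X_n)$, so $V$ is not tempered.

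It then remains to convert non-temperedness into a concrete integrability obstruction. By the theory of asymptotic expansions of $K$-finite matrix coefficients on a connected semisimple Lie group with finite center (Harish-Chandra; see also the almost-$L^2$ criterion of Cowling, Haagerup and Howe), an irreducible unitary representation is tempered if and only if all of its $K$-finite matrix coefficients lie in $L^{2+\delta}(G)$ for every $\delta>0$. Since $V$ is not tempered, there are $K$-finite vectors $v,w\in V$---which may be chosen inside $\Ec^{\even}_{\lambda}(X_n)\subseteq L^2(X_n)$---and a number $\delta_0>0$ with $g\mapsto\langle L(g)v,w\rangle\notin L^{2+\delta_0}(G)$. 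Taking $p':=2+\delta_0$, the functional $\langle\,\cdot\,,w\rangle\in L^2(X_n)'$, and $p\in(1,2)$ conjugate to $p'$, Proposition \ref{Prop Factorizability of Banach reps} now shows that $(L,L^2(X_n))$ is not $L^p$-factorizable.

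The crux is the first step of the construction: producing the non-tempered constituent of $L^2(X_n)$. This is exactly where the hypothesis $n\geq 4$ enters---it guarantees $\rho>1$ and hence the existence of the discrete parameter $\lambda=\rho-1$---and it is the substantive input imported from Theorem \ref{thmkksh1}; the reformulation via Proposition \ref{Prop Factorizability of Banach reps} and the passage from non-temperedness to a matrix coefficient outside $L^{p'}(G)$ are routine. I would also remark that this argument yields only some $p<2$; the optimal range $p>\tfrac{n-1}{n-2}$ requires the sharper analysis carried out in Theorem \ref{main theorem}.
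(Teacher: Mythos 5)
Your overall strategy coincides with the paper's: reduce via Proposition \ref{Prop Factorizability of Banach reps} to exhibiting a matrix coefficient of $L^2(X_n)$ outside $L^{p'}(G)$ for some $p'>2$, and obtain it from a non-tempered irreducible constituent of $L^2(X_n)$ with real spectral parameter $0<\lambda<\rho$, $\lambda\in\rho-\N$, whose existence is exactly where $n\geq 4$ enters. The reduction, the choice $\lambda=\rho-1$, and the final appeal to the almost-$L^2$ characterization of temperedness for $K$-finite matrix coefficients all match the paper.

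The one place you diverge is how you establish that $V=\overline{\Ec^{\even}_\lambda(X_n)}$ is not tempered, and there I see a gap. You transfer non-temperedness from $\Ec^{\odd}_\lambda(X_n)$ through the infinitesimal equivalence asserted in Theorem \ref{thmkksh1}. That inference would be valid if ``not $G$-tempered'' there were a property of the underlying Harish-Chandra module (growth of $K$-finite matrix coefficients on $G$), since infinitesimal equivalence preserves those. But in the source \cite{KKS} the relevant notion is non-temperedness of the \emph{embedding} into $C^\infty(X_n)$, i.e., failure of the image functions to satisfy the tempered growth bound on $X_n$; this is a property of the realization, not of the infinitesimal equivalence class. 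Indeed the entire point of \cite{KKS} is that one and the same irreducible module admits both an $L^2$ (hence tempered) embedding and a non-tempered one, so such properties manifestly do not transfer across infinitesimal equivalence --- which is also why the theorem bothers to single out one of the two equivalent realizations in each parity case. So the step ``infinitesimally equivalent to something non-tempered, hence non-tempered'' needs replacement. The paper closes exactly this gap with a short direct argument: $E$ has a nonzero $K$-fixed vector, so if $E$ were $G$-tempered its spherical matrix coefficients would lie weakly in $L^2(G/K)$ and $E$ would have to be a unitary spherical principal series representation, forcing the spectral parameter into $i\R$ and contradicting $0<\lambda<\rho$. With that substitution your argument becomes the paper's proof.
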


The proposition is a special case of \cite[Example 5.10]{BK}. We give an independent proof.

\begin{proof}
Since $n\geq 4,$ clearly $\rho>1.$ Take $0<\lambda<\rho$ with $\lambda\in \rho-\mathbb{N}.$ By Theorem \ref{thmkksh1} there exists a subrepresentation $E$ of $L^2(X_{n})$ where $E=\Ec^{\odd}_\lambda(X_{n})$ or $\Ec^{\even}_\lambda(X_{n})$ depending on $\lambda-\rho$ is even or odd. From the above discussion, it is clear that $E^K\neq\{0\}.$ Now if $E$ is $G$-tempered, then we must have that all matrix coefficients $m_{v,\eta}$ with $\eta\in E^{K}\neq\{0\}$, lie weakly in $L^2(G/K).$ But then $E$ is a unitary principal series representation. The infinitesimal character of unitary principal series representations are such that $\lambda\in i\R$. This contradicts with the assumption $0<\lambda<\rho$.
It follows that $E$ is not $G$-tempered and hence the matrix coefficients are not contained in $L^{2+\varepsilon}(G)$ for suffiently small $\varepsilon>0$. The assertion now follows from Proposition \ref{Prop Factorizability of Banach reps}.
\end{proof}

\section{Smooth-$L^{p}$-factorization}
\subsection{Definition}
Let $G$ be a unimodular Lie group.
The left and right actions of $G$ on itself naturally induce a Banach representation of $G\times G$ on $L^p(G)$. The space of all smooth vectors for this representation in $L^{p}(G)$ is given by
$$
L^p(G)^{\infty}
=\{f \in C^{\infty}(G): L(u)R(v)f\in L^p(G),~~\forall u,v\in \mathcal{U}(\mathfrak{g})\},
$$
where $L$ and $R$ denote the left and right action of $U(\gf)$, respectively.
Now if  a Banach representation $(\pi, E)$ of $G$ is $L^p$-factorizable, then the $G$-equivariance of the map $L^p(G)\times E\rightarrow E$ yields that the canonical map
$$
L^p(G)^{\infty}\times E^{\infty}\rightarrow E^{\infty}
$$
is continuous. This motivates the following definition.
\begin{definition}
We say that a representation $(\pi,E)$ of $G$ is {\em{smoothly-$L^p$-factorizable}} provided that the canonical map
$$
C_c^{\infty}(G)\times E^{\infty}\rightarrow E^{\infty},\quad(f,v)\rightarrow \pi(f)v
$$
extends to a continuous bi-linear map
$$
L^p(G)^{\infty}\times E^{\infty} \rightarrow E^{\infty}.
$$
\end{definition}

In the remainder of this section we assume that $G$ is reductive and we will show that the left-regular representation of $G$ on the Harish-Chandra Schwartz space of $G$ is smoothly-$L^{p}$-factorizable for all $1\leq p<2$.

\subsection{Harish-Chandra Schwartz space}

We fix a  Cartan involution $\theta$ and denote by $K$ the maximal compact subgroup which is fixed by $\theta$.  By abuse of notation we write $\theta$ for the derived automorphism of the Lie algebra $\mathfrak{g} $ of $G$, which induces the Cartan decomposition $\mathfrak{g}=\mathfrak{k}\oplus\mathfrak{s}$. Here $\Lie(K)=\mathfrak{k}$. Let $\mathfrak{a}\subset\mathfrak{s}$ be a maximal abelian subspace. We denote the set of restricted roots of $\mathfrak{a}$ in $\mathfrak{g}$ by $\Sigma \subset \mathfrak{a}^{*}\setminus \{0\}$ and record the following root space decomposition
$$
\mathfrak{g}
=\mathfrak{a}\oplus\mathfrak{m}\oplus\left(\bigoplus_{\alpha\in \Sigma}\mathfrak{g}^{\alpha}\right).
$$
Let $A$ be the connected  subgroup of $G$ with Lie algebra $\mathfrak{a}$. Note that the usual exponential map $\exp: \mathfrak{a}\rightarrow A$ is a diffeomorphism and we write $\log :A\rightarrow \mathfrak{a}$ for its inverse. With respect to a fixed choice of Weyl chamber $\af^{+}$, we denote the system of positive roots by $\Sigma^{+}$. We then write $\mathfrak{n}:=\oplus_{\alpha\in \Sigma^+}\mathfrak{g}^{\alpha}$
and $ N :=\exp \mathfrak{n}$. Now in view of the Iwasawa decomposition $G=NAK$, we define the projection ${\bf{a}}:G\rightarrow A$ by $g\in N{\bf{a}(g)}K$ for $g\in G$. We let $\rho:= \frac12 \sum_{\alpha\in \Sigma^+}\dim (g^{\alpha})\alpha \in \mathfrak{a}^{*}$ and for any $\lambda\in \mathfrak{a}^{*}$ and $a\in A$, we define $a^{\lambda}=e^{\lambda(\log a)}$.
Let $dk$ denote the invariant probability measure on $K$ and let $da$ be a Haar measure on $A$. We normalize the Haar measure on $G$ so that with respect to the polar decomposition $G=K\overline{A^+}K$, where $A^+:= \exp(\mathfrak{a}^+)$, we have
\begin{equation}
\label{intpol}
\int_G\varphi(g)\,dg
= \int_{K}\int_{\overline{A^+}}\int_{K}\varphi(k_1ak_2) J(a)\,dk_1\,da\,dk_2
\qquad\big(\varphi\in C_{c}(G)\big)
\end{equation}
where
$$
J(a)
:=\prod_{\alpha\in \Sigma^{+}} \left(e^{\alpha(\log a)}-e^{-\alpha(\log a)}\right)^{m_{\alpha}}\qquad(a\in A).
$$

Recall the Harish-Chandra $\Xi$ function
$$
\Xi(g)
:= \int_{K}{\bf{a}}(kg)^{\rho}\,dk.
$$
This is a $K$-bi-invariant smooth function on $G$, which, in view of the polar decomposition can be realised as a function on $\overline{A^+}$. As $\Xi$ is a spherical function, it features the mean value property
\begin{equation}
	\label{mvtxi}
\int_{K}\Xi(xky)\,dk
=\Xi(x)\Xi(y)\qquad(x,y\in G).
\end{equation}
We now record the following estimate
\begin{equation}
\label{sphest}
a^{-\rho}\leq \Xi(a)\le C a^{-\rho}(1+\| \log a\|)^d\qquad \big(a\in \overline{A^+}\big)
\end{equation}
where $C$ and $d$ are constants.
We define
$$
r(g)
:=\|\log a\|\qquad \big(g=k_1ak_2\in K\overline{A^+}K\big).
$$
It is not difficult to see that
\begin{equation}\label{subadwt}
r(xy)\leq r(x)+r(y)\qquad(x,y\in G).
\end{equation}

For each $n\in \mathbb{Z}$, we let $\mathcal{C}_n(G)$ stand for the set of all continuous functions $f$ on $G$ such that
\begin{equation} \label{pn}
p_n(f)
:=\sup_{g\in G}|f(g)|\Xi(g)^{-1}(1+r(g))^n<\infty.
\end{equation}
For each $u,v \in U(\gf)$, we consider the seminorm
$$
p_{n,u,v}(f)
:= p_n\big(L(u)R(v)f\big).
$$
The Harish-Chandra Schwartz space $\mathcal{C}(G)$ is the space of all smooth functions $f$ on $G$ for which $p_{n,u,v}(f)<\infty$ for all $u,v\in U(\gf)$, and $n\in \mathbb{Z}$. It is equipped with the Fr\'echet topology generated by the semi norms $p_{n,u,v}$. It is worth noting that an equivalent characterization of $\mathcal{C}(G)$ can be obtained by replacing $p_n$ by the norms $q_{n}$ given by
$$
q_n(f)
:= \left(\int_{G}|f(g)|^2(1+r(g))^n\,dg\right)^{\frac12}.
$$
See e.g., \cite[Theorem 9 on page 348]{Var}.
Harish-Chandra proved that the convolution product defines a continuous map
$$
\mathcal{C}(G)\times \mathcal{C}(G)\to \mathcal{C}(G),\quad (f,h)\mapsto f*h.
$$
See e.g., \cite[Theorem 18 on page 357]{Var}.

\subsection{Bernstein's invariant Sobolev lemma}
Let $H\subset G$ be a closed subgroup of $G$. We consider the homogeneous space $Z:= G/H$. Assume that $Z$ is unimodular, that is, $Z$
carries a positive $G$-invariant Radon measure. We call a locally bounded function $w: Z\rightarrow \R_{>0}$ a weight on $Z$ provided for any compact subset $\Omega\subset G$, there exists $C>0$ such that
$$
w(gz)
\leq C w(z)\qquad (z\in Z, g\in \Omega).
$$
We fix a ball $B$ in $G$, by which, we always mean a compact symmetric neighborhood of $\bf{1}$ in $G$. Define
$$
{\bf{v}}(z)
=\text{vol}_{Z}(B\cdot z)\qquad (z\in Z).
$$
It is not difficult to see that $\bf{v}$ is a weight. We call $\bf{v}$ the volume weight on $Z$ corresponding to the ball $B$. Different choices of balls in $G$ give rise to equivalent weights in the sense that if $v_1$ and $v_2$ are two weights corresponding to balls $B_1$ and $B_2$, respectively, then there exists a constant $c>0$ such that
$$
\frac{1}{c} v_1(z)
\leq v_2(z)
\leq c v_1(z)\qquad (z\in Z).
$$

\begin{lemma}[Bernstein's invariant Sobolev lemma]
Let $1\leq p<\infty $ and $k\in \mathbb{N}$ be such that $pk>\dim G$. Fix a ball $B$ in $G$ and let $\bf{v}$ be the corresponding volume weight. Let $\|\cdot\|_{p,k,B\cdot z}$ be the $k$'th $L^{p}$ Sobolev norm on $B\cdot z$. Then for all $f\in C^{\infty}(Z)$, there exists constant $C>0$ such that
$$
|f(z)|
\leq C{\bf{v}}(z)^{-\frac1p}\|f\|_{p,k, B\cdot z}\qquad (z\in Z).
$$
\end{lemma}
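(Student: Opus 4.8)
The strategy is to lift $f$ to $G$ and reduce to the classical Euclidean Sobolev embedding, keeping all constants uniform in $z$ by exploiting that balls in $G$ have a fixed size.

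\emph{Step 1 (Sobolev on $G$).} Fix $z\in Z$ and put $F_z\in C^{\infty}(G)$, $F_z(g):=f(gz)$, so that $f(z)=F_z(\mathbf 1)$. Choose exponential coordinates around $\mathbf 1$ together with a Euclidean ball $\bar B_r(0)$ whose preimage is contained in $B$. Since $pk>\dim G$, the classical Sobolev embedding $W^{k,p}(\bar B_r(0))\hookrightarrow C^{0}$ holds; rewriting the Euclidean Sobolev norm in terms of the invariant differential operators $R(u)$ on $G$, with $u$ ranging over a basis of $\mathcal U(\gf)_{\le k}$, and the Haar measure (which is comparable to Lebesgue measure on the compact set $B$), one obtains a constant $C_0=C_0(B,k,p)$ with
$$
|f(z)|^{p}\le C_0\sum_{\operatorname{ord}u\le k}\int_{B}\bigl|R(u)F_z(g)\bigr|^{p}\,dg\qquad(z\in Z).
$$

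\emph{Step 2 (transporting down to $Z$).} For the orbit map $\Phi_z\colon G\to Z$, $g\mapsto gz$, a direct computation gives $R(u)F_z=(u_Zf)\circ\Phi_z$, where $u\mapsto u_Z$ is the realization of $\mathcal U(\gf)$ as $G$-invariant differential operators on $Z$; if one works with a convention in which an $\operatorname{Ad}(g)$-twist appears, that twist is bounded for $g\in B$ and is absorbed into the constant. Writing $z=g_0H$, Weil's integration formula for $G\to G/H$ (both unimodular) yields, for $\varphi\in C(Z)$ with $\varphi\ge0$,
$$
\int_{B}\varphi(gz)\,dg=\int_{B\cdot z}\varphi(w)\,\rho_z(w)\,d\mu_Z(w),\qquad
\rho_z(g''H):=\operatorname{vol}_H\!\bigl((g'')^{-1}Bg_0\cap H\bigr).
$$
Plugging in $\varphi=|u_Zf|^{p}$ and combining with Step 1 reduces the lemma to the single uniform estimate
$$
\rho_z(w)\le C_2\,{\bf v}(z)^{-1}\qquad(z\in Z,\ w\in B\cdot z),
$$
since then $\int_{B}|u_Zf(gz)|^{p}\,dg\le C_2\,{\bf v}(z)^{-1}\|f\|_{p,k,B\cdot z}^{p}$ for each of the finitely many operators occurring.

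\emph{Step 3 (the density bound — the crux).} This is where the volume weight enters. Taking $\varphi\equiv1$ above shows $\int_{B\cdot z}\rho_z\,d\mu_Z=\operatorname{vol}_G(B)$, so the real content is that $\rho_z$ has oscillation bounded independently of $z$. I would establish this through a short chain of comparisons, each obtained by pulling the $H$-factor out of the Haar integral over $H$ and by conjugating a fixed covering of $B^{2}$ by $g_0$: (i) $\rho_z(w)\le\kappa_2(z):=\operatorname{vol}_H(g_0^{-1}B^{2}g_0\cap H)$ for $w\in B\cdot z$; (ii) covering $B^{2}$ by a fixed number $N$ of translates of a small symmetric ball $B_0$ with $B_0^{2}\subseteq B$ and conjugating by $g_0$ gives $\kappa_2(z)\le N\,\kappa_1(z)$, where $\kappa_1(z):=\operatorname{vol}_H(g_0^{-1}Bg_0\cap H)$; (iii) since $b^{-1}B^{3}\supseteq B$ for $b\in B^{2}$, one has $\rho_z^{B^{3}}(w)\ge\kappa_1(z)$ for all $w\in B^{2}\cdot z$, hence $\operatorname{vol}_G(B^{3})=\int_{B^{3}\cdot z}\rho_z^{B^{3}}\,d\mu_Z\ge\kappa_1(z)\operatorname{vol}_Z(B^{2}\cdot z)$; (iv) combining these and using $B\cdot z\subseteq B^{2}\cdot z$,
$$
\rho_z(w)\le\kappa_2(z)\le N\kappa_1(z)\le\frac{N\operatorname{vol}_G(B^{3})}{\operatorname{vol}_Z(B^{2}\cdot z)}\le\frac{N\operatorname{vol}_G(B^{3})}{{\bf v}(z)}.
$$
Every constant here depends only on $G$, $H$, $B$, $B_0$, $k$, $p$.

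Putting the three steps together and taking $p$-th roots gives $|f(z)|\le C\,{\bf v}(z)^{-1/p}\|f\|_{p,k,B\cdot z}$ with $C$ depending only on $G,H,B,k,p$. The only genuinely delicate point is Step 3: all the covering and conjugation estimates there must be arranged so that no constant depends on $z$ — the naive bound $\rho_z(w)\le\operatorname{vol}_H(g_0^{-1}B^{2}g_0\cap H)$ is not by itself uniform, which is precisely why one has to pass through $B^{3}$ and the total-mass identity. Steps 1 and 2 are, respectively, classical Sobolev theory and routine bookkeeping with Weil's formula.
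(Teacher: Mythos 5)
The paper offers no proof of this lemma, deferring instead to \cite{B} and \cite{KSch}; your argument is correct and is essentially the proof found in those references: a local Sobolev estimate on $G$ applied to $g\mapsto f(gz)$, Weil's integration formula for $G\to G/H$, and a covering/doubling argument showing that the fibre density $\rho_z$ is bounded by a constant times ${\bf v}(z)^{-1}$ uniformly in $z$. The one genuinely delicate point you flag --- uniformity in $z$ of the density bound --- is handled correctly by your passage through $B^{3}$ and the total-mass identity, and the $\operatorname{Ad}(g)$-twist relating $R(u)F_z$ to the differential operators defining $\|\cdot\|_{p,k,B\cdot z}$ is indeed harmless since $g$ stays in the compact set $B$.
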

\begin{proof} See \cite[Sect. 3.4, Key Lemma]{B} or  \cite[Lemma 4.2]{KSch}. \end{proof}

We apply the invariant Sobolev theorem to $G$ viewed as the homogeneous space $G\times G/ \diag(G)$. We choose a ball $B\subset G$ and define $\bf{v}:G\to \R_{>0}$ be the volume weight corresponding to the ball $B\times B$ of $G\times G$, i.e.,
$$
{\bf{v}}(x)
=\text{Vol}(BxB)\qquad (x\in G).
$$
In this context we record that there exists a $c>0$ so that
\begin{equation}
\label{ballest}
c^{-1}a^{2\rho}\leq {\bf{v}}(x) \leq ca^{2\rho} \qquad (x \in KaK, a\in \overline{A^+}).
\end{equation}
Hence, in view of the above lemma, we observe that for every $k>\frac{\dim (G\times G)}{p}$ there exists a $C>0$ so that
\begin{equation}
	\label{brlemmacon}
	|f(x)|\leq C {\bf{v}}(x)^{-\frac1p}\|f\|_{p,k, BxB}\leq C {\bf{v}}(x)^{-\frac1p}\|f\|_{p,k}
\end{equation}
for any $f\in L^p(G)^{\infty}$.

\begin{lemma}
Let $1\leq p<2$ and $k\in \N$ with $k>\frac{\dim (G\times G)}{p}$. For each $n\in \mathbb{Z}$ there exists a constant $C_{n}>0$ so that
$$
p_n(f\ast h)
\leq C_n\|f\|_{p,k}~~p_n(h)\qquad \big(h\in  \mathcal{C}_n(G), f\in L^p(G)^{\infty}\big).
$$
\end{lemma}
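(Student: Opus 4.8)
The plan is to estimate $p_n(f*h)$ pointwise by bounding $|f*h(g)|\,\Xi(g)^{-1}(1+r(g))^n$ uniformly in $g$. Fix $g\in G$. Writing out the convolution,
\[
|f*h(g)|\leq \int_G |f(x)|\,|h(x^{-1}g)|\,dx,
\]
and using the defining bound $|h(y)|\leq p_n(h)\,\Xi(y)(1+r(y))^{-n}$ for $h\in\mathcal{C}_n(G)$, together with $|f(x)|\leq C\,\mathbf{v}(x)^{-1/p}\|f\|_{p,k}$ from the invariant Sobolev estimate \eqref{brlemmacon}, I get
\[
|f*h(g)|\leq C\,\|f\|_{p,k}\,p_n(h)\int_G \mathbf{v}(x)^{-1/p}\,\Xi(x^{-1}g)\,(1+r(x^{-1}g))^{-n}\,dx.
\]
So the whole statement reduces to showing that, for a suitable constant $C_n$,
\[
\int_G \mathbf{v}(x)^{-1/p}\,\Xi(x^{-1}g)\,(1+r(x^{-1}g))^{-n}\,dx\ \leq\ C_n\,\Xi(g)\,(1+r(g))^{-n}.
\]

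The main tool for this is the mean value property \eqref{mvtxi} of $\Xi$ and the subadditivity \eqref{subadwt} of $r$. First I would handle the weight factor: by \eqref{ballest}, $\mathbf{v}(x)^{-1/p}$ is comparable to $a_x^{-2\rho/p}$ where $x\in Ka_xK$, and since $p<2$ we have $2\rho/p>\rho$, so $\mathbf{v}(x)^{-1/p}\leq c\,a_x^{-2\rho/p}\leq c\,\Xi(x)^{2/p}\cdot(\text{polynomial correction})$, using the lower bound in \eqref{sphest}. Actually the cleaner route is to bound $\mathbf{v}(x)^{-1/p}\leq C\,\Xi(x)^{2/p}(1+r(x))^{2d/p}$ directly from \eqref{sphest} and \eqref{ballest}. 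Then I would split the integral over $K\overline{A^+}K$ using \eqref{intpol}, perform the two $K$-integrations using the mean value property \eqref{mvtxi} (writing $x=k_1 a k_2$, integrating $\Xi(k_2^{-1}a^{-1}k_1^{-1}g)$ against $dk_1$ gives $\Xi(a^{-1})\Xi(\cdot)=\Xi(a)\Xi(k_2^{-1}g)$, and $\int_K$ of that against $dk_2$ gives again via \eqref{mvtxi} a clean product), reducing everything to an integral over $\overline{A^+}$ of the form
\[
\Xi(g)\int_{\overline{A^+}} \Xi(a)^{2/p}\,\Xi(a)\,J(a)\,(1+\|\log a\|)^{N}\,da
\]
for some exponent $N$ depending on $n,d$; here the subadditivity $(1+r(x^{-1}g))^{-n}\leq (1+r(x))^{n}(1+r(g))^{-n}$ pulls out the $(1+r(g))^{-n}$ and dumps the $(1+r(x))^n$ into the polynomial factor.

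The crucial step — and the main obstacle — is the convergence of this last $A^+$-integral. Since $\Xi(a)\leq C a^{-\rho}(1+\|\log a\|)^d$ and $J(a)\leq C' a^{2\rho}$, the integrand is bounded by $C'' a^{-\rho(2/p+1)}\cdot a^{2\rho}\cdot(1+\|\log a\|)^{M}=C'' a^{\rho(1-2/p)}(1+\|\log a\|)^{M}$; because $p<2$ we have $1-2/p<0$, so $a^{\rho(1-2/p)}$ decays exponentially on $\overline{A^+}$ and dominates any polynomial factor $(1+\|\log a\|)^M$, giving a finite integral. This is exactly where the hypothesis $1\leq p<2$ is used — at $p=2$ the exponential gain disappears and the argument breaks, which is the analytic heart of the Kunze–Stein mechanism. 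I would carry out this estimate carefully, keeping track of how $N$ and $M$ depend on $n$ (they grow linearly in $n$), so that the resulting constant $C_n$ is finite for each fixed $n$; no uniformity in $n$ is claimed, so this suffices. Assembling the pieces gives $p_n(f*h)\leq C_n\|f\|_{p,k}\,p_n(h)$ as asserted.
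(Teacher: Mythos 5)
Your proposal is correct and follows essentially the same route as the paper's proof: Bernstein's invariant Sobolev estimate to bound $|f|$ pointwise by $\mathbf{v}^{-1/p}\|f\|_{p,k}$, polar coordinates, the mean value property \eqref{mvtxi}, subadditivity of $r$, and convergence of the resulting $\overline{A^+}$-integral from the exponential gain $a^{\rho(1-2/p)}$ for $p<2$. The only slip is that the subadditivity step should read $(1+r(x^{-1}g))^{-n}\leq (1+r(x))^{|n|}(1+r(g))^{-n}$ (with $|n|$, not $n$, so that it also holds for $n<0$), which does not affect the rest of the argument.
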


\begin{proof}
Fix $n \in \mathbb{Z}$. Using \eqref{brlemmacon}, we observe that there exists a constant $C>0$ so that for every $f\in L^{p}(G)^{\infty}$ and $h\in \mathcal{C}_{n}(G)$
\begin{align}\label{1steq}
	\nonumber
    |f\ast h(x)|&=\left|\int_G f(g)h(g^{-1}x)\,dg\right|\\
	&\leq C\|f\|_{p,k}p_n(h)\int_{G} {\bf{v}}(g)^{-\frac1p}\Xi(g^{-1}x)\left(1+r(g^{-1}x)\right)^{-n}\,dg.
\end{align}
In view of \eqref{intpol} the last integral takes the form
\begin{align*}
	\int_{K}\int_{A^+}{\bf{v}}(a)^{-\frac1p}\Xi(a^{-1}kx)\left(1+r(a^{-1}kx)\right)^{-n} J(a)\,dk\,da.
\end{align*}
In order to estimate this, we first notice that for every $k\in K$, $a\in A$ and $x\in G$
$$
\left(1+r(a^{-1}kx)\right)^{-n}\leq (1+r(x))^{-n}(1+r(a))^{|n|}.
$$
Indeed, using \eqref{subadwt}, we first see that
$$
1+r(a^{-1}kx)
\leq 1+r(a)+r(x)\leq (1+r(x))(1+r(a)).
$$
So, if $n<0$, then we have
$$
(1+r(a^{-1}kx))^{-n}
\leq (1+r(x))^{-n}(1+r(a))^{-n}.
$$
Also we observe that
$$
1+r(x)
=1+r(aa^{-1}kx)
\leq (1+r(a))(1+r(a^{-1}kx)).
$$
Now if $n>0$, the above  yields
$$
(1+r(a^{-1}kx))^{-n}
\leq {(1+r(x))^{-n}}{(1+r(a))^{n}}
$$
proving the claim. Using this estimate we have
\begin{align*}
	&\int_{K}\int_{A^+}{\bf{v}}(a)^{-\frac1p}\Xi(a^{-1}kx)\left(1+r(a^{-1}kx)\right)^{-n} J(a)\,dk\,da\\
    &\leq (1+r(x))^{-n} \int_{A^+}{\bf{v}}(a)^{-\frac1p}\left(\int_{K}\Xi(a^{-1}kx)dk\right)\left(1+r(a)\right)^{|n|} J(a)\,da\\
	&=(1+r(x))^{-n}\Xi(x) \int_{A^+}{\bf{v}}(a)^{-\frac1p}\Xi(a^{-1})\left(1+r(a)\right)^{|n|} J(a)\,da
\end{align*}
where in the last equality we have used the mean value property
\eqref{mvtxi}  of the $\Xi$-function. We now use \eqref{sphest}, and \eqref{ballest} to see that
\begin{align*}
C_n:&=\int_{A^+}{\bf{v}}(a)^{-\frac1p}\Xi(a^{-1})\left(1+r(a)\right)^{|n|} J(a)\,da\\
&\lesssim \int_{A^+}a^{-\frac2p\rho}a^{-\rho}(1+\|\log a\|)^d\left(1+r(a)\right)^{|n|} a^{2\rho}\,da\\
&= \int_{A^+} e^{-\left(\frac2p-1\right)\rho(\log a)}(1+\|\log a\|)^d\left(1+r(a)\right)^{|n|}\,da,
\end{align*} which is finite as $p<2$. From \eqref{1steq}, we obtain
$$
|f\ast h(x)|(1+r(x))^{n}\Xi(x)^{-1}
\leq CC_n \|f\|_{p,k}p_n(h)\qquad (x\in G),
$$
completing the proof.
\end{proof}

As an immediate consequence we obtain the following result.
\begin{cor}\label{cor smoothlp} For $1\leq p<2$, representations
$(L, \mathcal{C}_n(G))$ and $(L, \mathcal{C}(G))$ are smoothly $L^p$-factorizable.
\end{cor}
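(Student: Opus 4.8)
The plan is to deduce Corollary \ref{cor smoothlp} directly from the preceding Lemma together with Harish-Chandra's theorem that convolution is continuous on $\mathcal{C}(G)$ and the definition of smooth vectors for the left regular representation. First I would recall that the smooth vectors $\mathcal{C}_n(G)^\infty$ and $\mathcal{C}(G)^\infty$ for the left regular representation $L$ coincide, as topological vector spaces, with $\mathcal{C}(G)$ itself: indeed $\mathcal{C}(G)$ is already a space of smooth functions stable under $L(u)$ and $R(v)$ for all $u,v\in U(\gf)$, and its Fréchet topology is precisely that generated by the seminorms $p_{n,u,v}$, which is the smooth-vector topology. Hence the map we must extend is
\[
C_c^\infty(G)\times \mathcal{C}(G)\to \mathcal{C}(G),\qquad (f,h)\mapsto f\ast h,
\]
and the target domain of the extension is $L^p(G)^\infty\times \mathcal{C}(G)$.

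\textbf{Key steps.} For the case of $\mathcal{C}_n(G)$: fix $n\in\Z$ and $u,v\in U(\gf)$. The identity $L(u)R(v)(f\ast h)=(L(u)f)\ast(R(v)h)$ reduces an estimate on $p_{n,u,v}(f\ast h)$ to an estimate of the form $p_n\big((L(u)f)\ast(R(v)h)\big)$. Now apply the preceding Lemma with $f$ replaced by $L(u)f$ and $h$ replaced by $R(v)h$: since $L(u)f\in L^p(G)^\infty$ whenever $f\in L^p(G)^\infty$, and $R(v)h\in\mathcal{C}_n(G)$ whenever $h\in\mathcal{C}_n(G)$, we obtain
\[
p_{n,u,v}(f\ast h)\leq C_n\,\|L(u)f\|_{p,k}\,p_n(R(v)h),
\]
with $k>\dim(G\times G)/p$ fixed. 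The right-hand side is bounded by a continuous seminorm in $f$ on $L^p(G)^\infty$ times a continuous seminorm in $h$ on $\mathcal{C}_n(G)$, which is exactly the joint continuity needed; density of $C_c^\infty(G)$ in $L^p(G)^\infty$ then gives the unique continuous extension. For $\mathcal{C}(G)$ one runs the same argument for every $n$ and every pair $u,v$: the seminorms $p_{n,u,v}$ generate the topology of $\mathcal{C}(G)$, so controlling each of them by a product of continuous seminorms establishes smooth-$L^p$-factorizability of $(L,\mathcal{C}(G))$ as well. Alternatively, one may note $\mathcal{C}(G)=\bigcap_n\mathcal{C}_n(G)$ as Fréchet spaces and invoke the $\mathcal{C}_n$ case termwise.

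\textbf{Main obstacle.} There is no serious obstacle; the content is entirely in the preceding Lemma. The only points requiring a word of care are (i) the identification of the smooth-vector space and topology of $(L,\mathcal{C}_n(G))$ and $(L,\mathcal{C}(G))$ with $\mathcal{C}(G)$ — so that ``$E^\infty$'' in the definition of smooth-$L^p$-factorizability is the space on which the Lemma operates — and (ii) checking that $L(u)$ maps $L^p(G)^\infty$ continuously into itself and $R(v)$ maps $\mathcal{C}_n(G)$ continuously into itself, so that the substitution into the Lemma is legitimate and produces continuous seminorm bounds. Both are routine consequences of the definitions of the respective smooth-vector topologies and the fact that convolution commutes with $L(u)$ on the left factor and with $R(v)$ on the right factor.
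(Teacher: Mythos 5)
Your proposal is correct and follows the paper's route exactly: the paper derives the corollary as an immediate consequence of the preceding convolution estimate, and your write-up merely supplies the routine details (the identity $L(u)R(v)(f\ast h)=(L(u)f)\ast(R(v)h)$, stability of $L^p(G)^\infty$ under $L(u)$, and density of $C_c^\infty(G)$). The only slight inaccuracy is the claim that $\mathcal{C}_n(G)^\infty=\mathcal{C}(G)$ for the \emph{left} regular representation --- the smooth vectors there are cut out only by the seminorms $p_n(L(u)\,\cdot\,)$, not by all $p_{n',u,v}$ --- but this is harmless since your estimate controls exactly those seminorms.
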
	

\section{$L^2(X)$ is not smoothly $L^p$-factorizable}

In this section we let $n\geq 4$ and let $X=X_n$.
\begin{theorem}\label{main theorem}  $L^2(X)$ is not smoothly $L^p$-factorizable for $p > \frac{n-1}{n-2}$.
\end{theorem}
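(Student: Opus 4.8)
The plan is to argue by contradiction: suppose $L^2(X)$ were smoothly $L^p$-factorizable for some $p > \frac{n-1}{n-2}$. The key is to test this against a concrete irreducible subrepresentation of $L^2(X)$ that sits ``too close'' to the critical exponent, and to extract a quantitative contradiction by plugging in an explicit approximate identity built from functions in $L^p(G)^\infty$.

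First I would pick, via Theorem \ref{thmkksh1}, a value $0 < \lambda < \rho$ with $\lambda \in \rho - \N$ such that the even or odd eigenspace $E = \Ec_\lambda^{\even}(X)$ or $\Ec_\lambda^{\odd}(X)$ embeds $G$-equivariantly into $L^2(X)$; this $E$ is irreducible and, crucially, its $K$-fixed vector $\eta \in E^K$ has a spherical matrix coefficient whose decay is governed by $\lambda$ rather than by $\rho$. Concretely, the spherical function $\varphi_\lambda$ on $G/K$ associated to this parameter decays like $a^{-\rho+\lambda}$ up to polynomial factors on $\overline{A^+}$, hence lies in $L^q(G)$ precisely for $q > \frac{2\rho}{\rho-\lambda}$. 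Choosing $\lambda$ as large as possible in $\rho - \N$, namely $\lambda = \rho - 1$ (this uses $n \geq 4$, so $\rho \geq 3/2 > 1$), gives $\rho - \lambda = 1$, and since $2\rho = n-1$ the matrix coefficient $m_{\eta,\eta}$ fails to lie in $L^{q}(G)$ for $q \leq \frac{2\rho}{\rho-\lambda} = n-1$. By the usual duality $\frac1q + \frac1{q'} = 1$ this non-membership at the threshold $q = n-1$ corresponds exactly to the forbidden range $q' \geq \frac{n-1}{n-2}$ for the conjugate exponent — matching the statement's bound $p > \frac{n-1}{n-2}$.

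Next I would transfer the smooth-factorizability hypothesis into a bound on this matrix coefficient. Restricting the continuous bilinear map $L^p(G)^\infty \times L^2(X)^\infty \to L^2(X)^\infty$ to the $G$-invariant subspace $E^\infty \subset L^2(X)^\infty$ (on which $L$ acts by $\pi$), composing with the continuous functional $\langle\,\cdot\,,\eta\rangle$, and using that the topology on $E^\infty$ is the Sobolev topology from Section 2, one gets: for some Sobolev seminorm $p_k$ on $L^p(G)^\infty$ and some $N$,
$$
|\langle \pi(f)\eta, \eta\rangle| \leq C\, p_k(f)\, p_N^{E}(\eta) \qquad (f \in C_c^\infty(G)).
$$
Here the right side involves only a \emph{finite} Sobolev norm of $f$, not $\|f\|_p$. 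The strategy is then to choose a sequence $f_j \in C_c^\infty(G)$ — say, smooth bump functions concentrating along the geodesic ray $\exp(t H_0)K$ with $t \to \infty$, normalized so that the relevant Sobolev norm $p_k(f_j)$ stays bounded — for which $\langle \pi(f_j)\eta,\eta\rangle = \int_G f_j(g)\, m_{\eta,\eta}(g)\, dg$ nevertheless diverges. This divergence is forced by the failure of $m_{\eta,\eta}$ to be in $L^{n-1}(G)$: along the ray the integrand is of size $\varphi_\lambda(g) \sim a^{-2\rho+1}$ while the volume form is $\sim a^{2\rho}\,da$, so the mass $\int a^{-2\rho+1} a^{2\rho}\, da = \int a\, da$ over the relevant region grows, whereas the Sobolev norm of a bump of fixed shape translated out the ray is controlled (the left/right $\U(\gf)$-derivatives of a translated bump have comparable $L^p$-mass, uniformly, once one accounts for the $a^{2\rho}$ Jacobian correctly). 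Balancing these two growth rates is exactly where the exponent $\frac{n-1}{n-2}$ appears, and making this balance precise — in particular choosing the right shape and normalization of $f_j$ so that $p_k(f_j) = O(1)$ while the pairing $\to\infty$ — will be the main obstacle.

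Finally, once such a sequence is produced, the displayed inequality $|\langle\pi(f_j)\eta,\eta\rangle| \leq C\, p_k(f_j)\, p_N^E(\eta) = O(1)$ contradicts $\langle\pi(f_j)\eta,\eta\rangle \to \infty$, so $L^2(X)$ cannot be smoothly $L^p$-factorizable for $p > \frac{n-1}{n-2}$; and since ordinary $L^p$-factorizability implies the smooth version, the last sentence of the introduction follows as well. I would organize the write-up as: (1) selection of $\lambda$ and the precise $L^q$-threshold for $\varphi_\lambda$ via the standard spherical-function asymptotics; (2) the reduction of the hypothesis to the finite-Sobolev matrix-coefficient bound using Section 2's Sobolev topology on $E^\infty$; (3) construction of the test functions $f_j$ with controlled Sobolev norm and divergent pairing; (4) assembling the contradiction.
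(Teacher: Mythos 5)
Your strategy is genuinely different from the paper's. The paper does not construct test functions at all: it observes that if $L^2(X)$ were smoothly $L^p$-factorizable, then by density of the $L^p$-Schwartz space $\mathcal{C}^p(G)$ in $L^p(G)^\infty$ and Dirac approximation it would suffice to show $\mathcal{C}^p(G\sslash K)*v_K=\{0\}$; this follows from $0=(\Omega-c_\lambda)(f*v_K)$ together with the invertibility of $\Omega-c_\lambda$ on $\mathcal{C}^p(G\sslash K)$, which is read off from the Paley--Wiener theorem of Gangolli--Varadarajan (the spherical transform of $\mathcal{C}^p(G\sslash K)$ lives on the strip $|\re\mu|<(\tfrac2p-1)\rho$, and $\lambda=\rho-1$ lies outside that strip exactly when $p>\tfrac{n-1}{n-2}$). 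Your route instead tests the continuity estimate directly against concentrated functions, using the asymptotics $m_{\eta,\eta}(a_t)\asymp e^{(\lambda-\rho)t}$; the exponent $\tfrac{n-1}{n-2}$ comes out of the same arithmetic ($2\rho=n-1$, $\rho-\lambda=1$), so the target is right. Note that your step (1) silently needs the two-sided lower bound on the spherical function, i.e.\ non-vanishing of the $c$-function at $\lambda=\rho-1$, to conclude $m_{\eta,\eta}\notin L^{n-1}(G)$ rather than merely an upper bound.

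The real problem is that your step (3) --- the construction of $f_j\in C_c^\infty(G)$ with all relevant seminorms $\|L(u)R(v)f_j\|_p$ bounded while $\int_G f_j\,m_{\eta,\eta}\,dg\to\infty$ --- is exactly the content of the theorem in your approach, and you leave it as ``the main obstacle'' rather than proving it. As stated it is also slightly off: the topology of $L^p(G)^\infty$ is given by the $L^p$-Sobolev seminorms $\|L(u)R(v)f\|_p$ (which include $\|f\|_p$ itself), and a bump translated out along the ray by $L(a_{t_j})$ has derivatives distorted by $\Ad(a_{t_j})$, whose eigenvalues grow exponentially in $t_j$; so ``translated bumps of fixed shape'' do not obviously have uniformly comparable Sobolev mass. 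The gap is fillable, but it requires a specific choice: take $f_j(g)=c_j\,\psi(r(g)-t_j)$ with $\psi\in C_c^\infty(\R)$, $\psi\geq0$, i.e.\ $K$-bi-invariant shells depending only on $r$. Since $r$ is (essentially) the distance function on $G/K$ and has bounded derivatives of all orders on $\{r\geq1\}$, one gets $\|L(u)R(v)f_j\|_p\leq C_{u,v}\,c_j\,e^{2\rho t_j/p}$, so the normalization $c_j=e^{-2\rho t_j/p}$ makes every seminorm $O(1)$, while $\int f_j m_{\eta,\eta}\gtrsim c_j e^{(2\rho-1)t_j}\to\infty$ precisely for $p>\tfrac{2\rho}{2\rho-1}=\tfrac{n-1}{n-2}$. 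Until you supply this (or an equivalent) construction and the derivative bounds, the proposal is an outline rather than a proof; with it, you obtain a more hands-on, self-contained alternative to the paper's Paley--Wiener argument, at the cost of some explicit estimates that the paper's operator-theoretic route avoids.
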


\begin{proof} We saw earlier that there exists a $K$-spherical discrete series $E\subset L^2(X)$ with spectral parameter
$0<\lambda <\rho$ subject to $\rho -\lambda\in \N$.  Recall that $\rho = \frac{1}{2}(n-1)$. Let
$E^K=\C v_K$.  For any $1\leq p<\infty$ we can define $L^p$-Schwartz spaces
$\mathcal{C}^p(G)$ simply by replacing the growth constraint $\Xi$ with $\Xi^{2\over p}$ in the definition \eqref{pn} of the Harish-Chandra Schwartz space $\mathcal{C}(G)$. Note that $\mathcal{C}(G) = \mathcal{C}^2(G)$ in our generalized framework. We denote by
$\mathcal{C}^p(G\sslash K)\subset \mathcal{C}^p(G)$ the subspace of $K$-bi-invariant functions.
Assume now that $p > \frac{n-1}{n-2}$ and that $L^2(X)$ is smoothly $L^p$-factorizable.
As $\mathcal{C}^p(G)$ is dense in $L^p(G)^\infty$ we would arrive via Dirac approximation at a contradiction
if these assumptions would yield
\begin{equation} \mathcal{C}^p(G\sslash K) *v_K = {0}.\end{equation}
To proceed we invoke the action of the Casimir element $\Omega$.
Then we have $\Omega v_K = (\lambda^2 - \rho^2)v_K$. We reformulate that in
$(\Omega - c_\lambda)v_K =0$ with $c_\lambda = \lambda^2 -\rho^2$.
Now for $f \in \mathcal{C}^p(G\sslash K)$ we have
$$ 0= f* (\Omega - c_\lambda)v_K =(\Omega - c_\lambda)f *v_K $$
We claim that $(\Omega - c_\lambda)$ is an invertible operator on $\mathcal{C}^p(G\sslash K)$ provided that $1<p<2$ is sufficiently close to $2$. In fact, the Paley-Wiener Theorem for the Fourier transform identifies $\mathcal{C}^p(G\sslash K)$ with certain holomorphic functions on the strip
$|\mathrm{Re} \lambda|<\epsilon \rho$ for $\e = \frac{2}{p} - 1$, see \cite[Th. 7.8.6]{GV}.  This implies in particular $(\Omega- c_\lambda)$ is invertible provided
$|\mathrm{Re} \lambda|> ( \frac{2}{p} - 1)\rho $. For the discrete series parameter $\lambda= \rho-1$ we obtain this condition for $p > \frac{n-1}{n-2}$ and the desired contradiction.
\end{proof}


\begin{thebibliography} {10}
	
\bibitem{BK} Y. Benoist and T. Kobayashi, {\it Temperedness of reductive homogeneous spaces}, J. Eur. Math. Soc. {\bf 17} (2015), 3015--3036.

\bibitem{B} J. Bernstein, {\it On the support of Plancherel measure}, Jour. of Geom. and Physics {\bf 5}, No. {\bf 4} (1988), 663--710.
	
\bibitem{C} M. Cowling, {\it The Kunze-Stein Phenomenon}, Ann. of Math. {\bf 107} (1978), 209--234.

\bibitem{vanDijk} G. van Dijk, {\it On a Class of Generalized Gelfand Pairs}, Math. Z. {\bf 193} (1986), 581--593.

\bibitem{Eymard} P. Eymard, {\it L'alg\`ebre de {F}ourier d'un groupe localement compact}, Bull. Soc. Math. France {\bf 92} (1964), 181--236.

\bibitem{GV} R. Gangolli and V.S. Varadarajan, {\it Harmonic Analysis of Spherical Functions on Real Reductive Groups}, Ergebnisse der Mathematik und ihrer Grenzgebiete {\bf 101}, Springer, 1988.

\bibitem{KSch} B. Kr\"otz, and H. Schlichtkrull, {\it Harmonic Analysis for Real Spherical Spaces}, Acta Mathematica Sinica, English Series   Mar., 2018, Vol. 34, No. 3, pp. 341--370

\bibitem{KKS} B. Kr{\"o}tz, J.J. Kuit, and H. Schlichtkrull, {\it Discrete series representations with non-tempered embedding}, Indag. Math. (N.S.) {\bf 33} (2022), 869--879.

\bibitem{KS} R. A. Kunze, and E. M. Stein, {\it Uniformly Bounded Representations and Harmonic Analysis of the 2 x 2 Real Unimodular Group}, Am. J. of Math. {\bf 82} (1960), 1--62.

\bibitem{SW} E. Samei and M. Wiersma, {\it Exotic {$\rm C^*$}-algebras of geometric groups}, J. Funct. Anal. {\bf 286} (2024)

\bibitem{Var}V. S. Varadarajan, {\it Harmonic analysis on real reductive groups}, Lecture Notes in Mathematics Vol. 576, Springer-Verlag, Berlin-New York, 1977.

\bibitem{W2} N. Wallach, {\it Real Reductive Groups II}, Academic Press, 1992.
\end{thebibliography}
\end{document}